\newtheorem{defi}{Definition}
\newtheorem{theorem}{Theorem}[section]
\newtheorem{lemma}[theorem]{Lemma}
\newtheorem{corollary}[theorem]{Corollary}
\newtheorem{proposition}[theorem]{Proposition}
\newdefinition{remark}{Remark}
\newproof{pf}{Proof}
\theoremstyle{definition}
\newtheorem{example}{Example}
\newcommand{\field}[1]{\mathbb{#1}}          \newcommand{\Q}{\field{Q}}
\newcommand{\N}{\field{N}}
                   \newcommand{\Z}{\field{Z}}
\newcommand{\C}{\field{C}}
\newcommand{\im}{\rm Im}
\newcommand{\mF}{{\mathcal F}}
\newcommand{\mX}{{\mathcal X}}
\newcommand{\f}{{\mathcal F}}
\newcommand{\ep}{\epsilon}
\newcommand{\sign}{\textnormal{sign}}
\begin{document}
	\title{Farey-subgraphs and Continued Fractions} 
	
	\author[sk]{S. Kushwaha}\corref{cor1}  
	\ead{seema28k@gmail.com}
	\author[rvt]{R. Sarma} 
	\ead{ritumoni@maths.iitd.ac.in}

	\cortext[cor1]{Corresponding author}
	\address[sk]{Indian Institute of Information Technology Allahabad, Prayagraj, India}
	\address[rvt]{Department of Mathematics,
		Indian Institute of Technology Delhi, India}

	\begin{abstract}
		
		In this note, we study a family of subgraphs of the Farey graph, denoted as $\mathcal{F}_N$ for every   $N\in\mathbb{N}.$ 
		We show that $\mathcal{F}_N$ is connected if and only if $N$ is either equal to one or a prime power. We introduce a class of continued fractions referred to as $\mathcal{F}_N$-continued fractions for each $N>1.$ We establish a relation between $\mathcal{F}_N$-continued fractions and certain paths from infinity in the graph $\mathcal{F}_N.$ We discuss existence and uniqueness of $\f_N$-continued fraction expansions of  real numbers. 
		
	\end{abstract}
	\maketitle	
	\section{Introduction}
	It is well known that regular continued fractions are described as paths in the Farey graph \cite{karp}.  Continued fractions with even or with odd partial denominators are also related to the Farey graph. In 1991, Jones, Singerman, and Wicks \cite{jones} studied certain graphs, namely $\f_{u,N},$ where  $u,N\in\mathbb{N}$ and $\mathrm{gcd}(u,N)=1$. 
	The vertex set of the graph  $\mathcal{F}_{u,N}$ is $\mX_N$ given by \begin{equation}\label{X_n}
	\mathcal{X}_N=\left\{\frac{p}{q}:~p,q\in\mathbb{Z},~ q>0,~\mathrm{gcd}(p,q)=1~\textnormal{and}~N|q\right\}\cup\{\infty\},
	\end{equation}   
and there is an edge in $\mathcal{F}_{u,N}$ between vertices $\frac{p}{q}$ and $\frac{r}{s},$  if and only if  $rq-sp= N$ with $p\equiv ur\mod N$ or $rq-sp= -N$ with $p\equiv -ur\mod N$.
	The graph $\mathcal{F}_{1,1}$ is the  Farey graph. The graph  $\mathcal{F}_{u,N}$ is connected exactly for $N\le 4$,  and it  is a tree exactly for $N=2,4$.  
 Using the graph $\mathcal{F}_{1,2}$ and $\mathcal{F}_{1,3}$, we have  constructed  continued fractions  called $\mathcal{F}_{1,2}$-continued fractions \cite{seema} and $\mathcal{F}_{1,3}$-continued fractions \cite{seema2}, respectively.  The graphs $\mathcal{F}_{u,N}$ and $\mathcal{F}_{N-u,N}$ are isomorphic under the map $v\mapsto -v$ for each vertex $v$ of $\mathcal{F}_{u,N}$. In fact, as undirected graphs $\mathcal{F}_{u,N}$ and $\mathcal{F}_{N-u,N}$ are identical. Remark \cite[1.1]{seema2} states that the graph $\mathcal{F}_{1,4}$ behaves same as the graph $\mathcal{F}_{1,2}$ and continued fractions arising from these graphs have similar properties. So it motivates us to explore more connected graphs which can produce continued fractions.
 
 In order to find such connected graphs, we reduce the conditions of adjacency of two vertices in the graph $\mathcal{F}_{u,N}$ and get a new family of graphs, denoted as $\f_N$ for $N\in\N$, which is defined precisely in Section 2. Further, we find that $\mathcal{F}_N$ is connected if and only if either $N=1$ or $N$ is a prime power. 
The new family of connected graphs motivates us to investigate whether there is an analogue of the regular continued fractions (or $\f_{1,2}$ or $\f_{1,3}$-continued fractions) on these graphs. In this article,  we show that a continued fraction of the form (finite or infinite)
 $$\frac{1}{0+}~\frac{N}{b+}~\frac{\epsilon_{1} }{a_{1}+}~\frac{\epsilon_{2}}{a_{2}+}~\cdots\frac{\epsilon_{n}}{a_{n}+}\cdots,$$ 
 where $b$ is an integer co-prime to $N$ and with certain conditions on $a_i$ and $\ep_i$ for $i\ge1$, is arising from the graph $\f_{N}$ for each $N\ge1.$ A precise definition of an $\f_{N}$-continued fraction is stated in Section 3.

One of the central theme of this article is to discuss best approximation properties of these new continued fractions. There are two different notions of best rational approximations for real numbers. For either of these notions, the reader may see the first paragraph of Section 6.  It is a classical result that every convergent of the regular continued fraction of  a real number $x$ is a best approximation of $x$ and conversely (except in the case that $x$ is a half-integer). Note that every real number has a continued fraction expansion with even partial quotients but their convergents are not necessarily best approximations \cite{schw,schw2}.

A suitable modification to the notion of best approximation, namely, a best approximation by an element of $\mX_2$ (or $\mX_3$) has been discussed in \cite{seema2,seema}. In these papers, authors have achieved results analogous to the classical one, that is,  convergents of $\mathcal{F}_{1,2}$-continued fractions and $\mathcal{F}_{1,3}$-continued fractions of a real number $x$ characterize best approximations of $x$ by elements of $\mX_2$ and $\mX_3$, respectively. In this article, we study best $\mX_{p^l}$-approximations of a real number, where $p$ is a prime and $l$ is a natural number. 

Section 2 deals with connectedness and other properties of the graph $\mathcal{F}_N$. In Section 3, we introduce the precise definition of an $\mathcal{F}_N$-continued fraction. To establish a correspondence between the graph and continued fractions, we introduce the notion of a well directed path.  Further, it is shown that if $N$ is a prime power, for every vertex $x$ in $\mathcal{F}_N$, there is a well directed path from infinity to $x$. In addition, we establish a one-to-one  correspondence between finite  $\mathcal{F}_N$-continued fractions and well directed paths from $\infty$ to a vertex (provided that there is a well directed path from $\infty$ to the vertex) in the graph $\mathcal{F}_N$ for each $N>1.$ 
	 
	 In Section 4, we formulate an algorithm to obtain an $\f_{p^l}$-continued fraction expansion of a vertex in $\f_{p^l}$,  where $p$ is a prime and $l\in \N.$
	 
	 In Section 5,  we use the algorithm and show that any real number can be expressed as an $\f_{p^l}$-continued fraction for any prime $p$ and  any natural number $l.$

	\section{Farey-subgraph $\f_N$}

	First, we introduce the graph $\f_N$  and discuss their properties which are useful to us in the subsequent sections. 
	\begin{defi} For each natural number $N,$ $\f_N$ is a graph with the set of vertices $\mathcal{X}_N$ (as defined in Equation \eqref{X_n})  and two vertices, say ${p}/{q}$ and ${r}/{s}$, are adjacent in $\mathcal{F}_N$ if and only if  $$rq-sp=\pm N.$$ 
		If $P$ and $Q$ are adjacent in $\f_N$ we write $P\sim_N Q$.
	\end{defi}

The graph $\mathcal{F}_{1,1}=\f_{1}$ is the Farey graph and the graphs  $\mathcal{F}_{2}$, $\mathcal{F}_{3}$ and $\mathcal{F}_{4}$ are same as $\mathcal{F}_{1,2}$, $\mathcal{F}_{1,3}$ and $\mathcal{F}_{1,4}$, respectively.
	
\begin{proposition}\label{subgraph} For every $N\in\N$, the graph $\f_N$ is isomorphic to a subgraph of the Farey graph.
	\end{proposition}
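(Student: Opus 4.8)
The plan is to exhibit an explicit graph embedding of $\f_N$ into the Farey graph $\f_1$. Recall that the vertex set $\mX_N$ consists of $\infty$ together with those rationals $p/q$ (in lowest terms, $q>0$) with $N\mid q$. The natural candidate for the embedding is the map $\phi\colon\mX_N\to\mX_1$ sending $\infty\mapsto\infty$ and $p/q\mapsto p/(q/N)$; note that since $\gcd(p,q)=1$ and $N\mid q$, we have $\gcd(p,q/N)=1$, so $\phi(p/q)$ is a genuine vertex of the Farey graph in lowest terms, and $\phi$ is clearly injective. So the first step is to verify $\phi$ is a well-defined injection on vertices.

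The second step is to check that $\phi$ preserves adjacency. If $p/q\sim_N r/s$ in $\f_N$, then by definition $rq-sp=\pm N$. Writing $q=Nq'$ and $s=Ns'$, this becomes $N(rq'-s'p)=\pm N$, hence $rq'-s'p=\pm 1$, which is exactly the Farey-adjacency condition between $\phi(p/q)=p/q'$ and $\phi(r/s)=r/s'$. (The edge cases involving $\infty$ are handled the same way: $\infty=1/0$ is adjacent in $\f_N$ to $p/q$ iff $q=\pm N$, i.e. $q'=\pm 1$, i.e. $\phi(p/q)$ is a Farey-neighbour of $\infty$.) Thus $\phi$ maps edges of $\f_N$ to edges of $\f_1$, so $\f_N$ is isomorphic to the subgraph of $\f_1$ induced on (or rather, spanned by) the image $\phi(\mX_N)$ together with those image edges. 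One should be slightly careful here about whether the claim is ``subgraph'' or ``induced subgraph''; the statement only claims the former, so it suffices that $\phi$ is an injective graph homomorphism, which the above establishes. If one wants the induced-subgraph version one would additionally argue the converse implication, which follows by reversing the computation: $rq'-s'p=\pm1$ forces $rq-sp=\pm N$.

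There is essentially no hard obstacle here; the only thing to be careful about is bookkeeping with the point $\infty$ and with signs/orientations of $q$, and confirming that $\gcd$ conditions transfer correctly under division by $N$. I would present the proof as: (1) define $\phi$ and check it is a well-defined injection of vertex sets; (2) show $P\sim_N Q \iff \phi(P)\sim_1\phi(Q)$ by the one-line determinant computation above, treating $\infty$ as $1/0$; (3) conclude that $\phi$ is an isomorphism onto the subgraph of $\f_1$ with vertex set $\phi(\mX_N)$ and edge set the $\phi$-images of edges of $\f_N$. The whole argument is short and computational, so in the write-up I would simply state $\phi$ and dispatch the verification in a few lines.
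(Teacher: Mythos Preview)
Your proposal is correct and follows exactly the same approach as the paper: define $\phi$ by $p/(Nq')\mapsto p/q'$ (with $\infty\mapsto\infty$), note it is injective, and check via the determinant identity that $P\sim_N Q \iff \phi(P)\sim_1 \phi(Q)$. Your write-up is simply a more detailed version of the paper's one-sentence proof.
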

	\begin{proof} Since the map $\phi:\mX_N\longrightarrow \mX_1$ 	given by
		$x/(Ny)\mapsto x/y$ is injective and $x/(Ny)$ and $r/(Ns)$ are adjacent in $\f_N$ if and only if $x/y$ and $r/s$ are adjacent in $\phi(\f_N)$, the result follows. 
	\end{proof} 
	 \begin{figure}[!h]
	 	\centering
	 	\includegraphics[width=.8\linewidth]{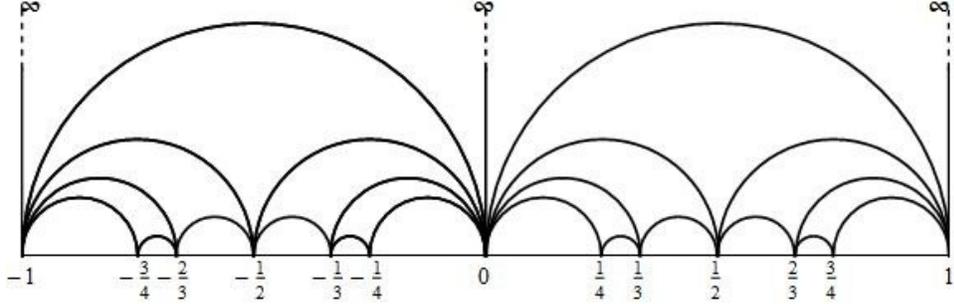}
	 	\caption{A few vertices and edges of the Farey graph in [-1,1]}\label{Fig:farey}
	 \end{figure}
	\noindent Edges of $\f_{N}$ are represented as hyperbolic geodesics in the upper-half plane $$\mathcal{U}=\{z\in\C: \im(z)>0\},$$
	that is, as Euclidean semicircles or half lines perpendicular to the real line. Figure 1 is a display of a few edges of the Farey graph in the interval [-1,1].
Since edges of the Farey graph do not cross each other, and $\f_N$ is embedded in the Farey graph, we have the following corollary of Proposition \ref{subgraph}.
	
	\begin{corollary}\label{nocrossing}
		No two edges cross in $\f_N$.
	\end{corollary}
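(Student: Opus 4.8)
The plan is to deduce Corollary \ref{nocrossing} directly from Proposition \ref{subgraph} together with the classical fact that the Farey graph is planar when drawn with edges as hyperbolic geodesics in $\mathcal{U}$ — that is, two edges $\{p/q, r/s\}$ and $\{p'/q', r'/s'\}$ of the Farey graph, realised as semicircles (or vertical half-lines) orthogonal to $\R$, intersect in $\mathcal{U}$ only if they share an endpoint. First I would recall the explicit criterion: the geodesic joining two points $a,b$ of $\R\cup\{\infty\}$ and the geodesic joining $c,d$ cross in $\mathcal{U}$ precisely when the pairs $\{a,b\}$ and $\{c,d\}$ \emph{link} on the boundary circle, i.e.\ exactly one of $c,d$ lies strictly between $a$ and $b$ in the cyclic order on $\R\cup\{\infty\}$. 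So the statement to invoke is that no two distinct Farey edges link in this sense, which is the standard Stern--Brocot / Ford-circle non-crossing fact: if $rq-sp=\pm1$ then $p/q$ and $r/s$ are Farey neighbours and the open interval they span on $\R$ contains no vertex $p''/q''$ that is a Farey neighbour of a vertex outside $[p/q,r/s]$ via an edge passing between them.

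Next I would feed this through the isomorphism $\phi$ of Proposition \ref{subgraph}. The key point is that $\phi$ is not merely a graph isomorphism onto its image but is realised by the \emph{geometric} map $z \mapsto z/N$ on $\mathcal{U}$ (which on the boundary sends $x/(Ny)\mapsto x/y$ and fixes $\infty$): this is the restriction of the M\"obius transformation $z\mapsto z/N$, hence a hyperbolic isometry of $\mathcal{U}$, and in particular it carries hyperbolic geodesics to hyperbolic geodesics and preserves incidence and crossing of geodesics. Therefore two edges of $\f_N$ cross in $\mathcal{U}$ if and only if their images under $\phi$ cross in $\mathcal{U}$; since the images are distinct edges of the Farey graph $\f_1$, which never cross, the original edges of $\f_N$ never cross either. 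This is essentially the argument the excerpt already gestures at in the sentence preceding the corollary, so I would simply make the isometry explicit.

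The only genuine obstacle — and it is minor — is bookkeeping about degenerate/shared-endpoint cases and about the vertical-line edges incident to $\infty$: one should state clearly that ``cross'' means intersect at an interior point of $\mathcal{U}$ that is not a common vertex, and check that the scaling map $z\mapsto z/N$ handles the edges through $\infty$ correctly (it sends the vertical half-line above $x/(Ny)$ to the vertical half-line above $x/y$). I would also note that if one prefers to avoid appealing to hyperbolic geometry at all, the non-crossing of Farey edges can be proved purely combinatorially from the unimodularity condition $rq-sp=\pm1$ (a short argument with $2\times2$ determinants showing the relevant linking inequalities are incompatible), and then the same determinant computation transports to $\f_N$ because the defining relation $rq-sp=\pm N$ becomes $r'q'-s'p'=\pm1$ after dividing out $N$ from the denominators; either route closes the proof in a couple of lines.
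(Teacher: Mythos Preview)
Your approach is exactly the paper's: the corollary is stated there without a separate proof, deduced in one sentence from Proposition \ref{subgraph} together with the non-crossing of Farey edges, and you have simply spelled out the geometric content of that sentence. One small slip: the M\"obius map realising $\phi$ on the boundary is $z\mapsto Nz$ (so that $x/(Ny)\mapsto x/y$), not $z\mapsto z/N$; with that correction your argument goes through verbatim.
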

	\begin{theorem}\label{connectednessthm} If $N=p^l$, where $p$ is a prime and $l\in\N\cup\{0\},$ the graph $\f_N$ is connected. 
	\end{theorem}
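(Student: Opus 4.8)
The plan is to show that any vertex $p/q \in \mX_{p^l}$ can be joined to $\infty$ by a path in $\f_{p^l}$, proceeding by induction on the denominator $q$ (which is a positive multiple of $N = p^l$). The base case is $q = N$: then $p/N$ is adjacent to $\infty = 1/0$ since $p \cdot 0 - N \cdot 1 = -N$, so every such vertex is directly connected to $\infty$. For the inductive step, suppose $q > N$ and write $q = Nq'$ with $q' > 1$. I want to produce a neighbour $r/s$ of $p/q$ in $\f_{p^l}$ with $0 < s < q$; by the inductive hypothesis $r/s$ is connected to $\infty$, and hence so is $p/q$.

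To find such a neighbour, I need $s$ with $N \mid s$, $0 < s < q$, and an integer $r$ with $rq - sp = \pm N$, i.e.\ $r \cdot (Nq') - s \cdot p = \pm N$. Dividing by $N$, writing $s = Ns'$, this becomes $r q' - s' p = \pm 1$, so I need $0 < s' < q'$ with $s' p \equiv \mp 1 \pmod{q'}$ — that is, $s'$ should be $\pm$ the inverse of $p$ modulo $q'$. Here is where primality of $N$ enters: since $\gcd(p, q) = 1$ and $N = p^l \mid q$, in particular $\gcd(p, q') = 1$ (as $q' \mid q$), so $p$ is indeed invertible mod $q'$, and one can choose the representative $s'$ of $+p^{-1}$ or $-p^{-1}$ in the range $\{1, \dots, q'-1\}$ provided $q' \ge 2$, which holds since $q' > 1$. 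With $s'$ chosen, set $s = Ns'$ and let $r$ be the corresponding integer; one checks $\gcd(r,s) = 1$ automatically from $rq - sp = \pm N$ together with $N \mid s$ and $N\mid q$, so $r/s$ is a genuine vertex of $\f_{p^l}$ adjacent to $p/q$ with smaller positive denominator.

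The main obstacle — and the reason the hypothesis $N = p^l$ is needed rather than arbitrary $N$ — is exactly the invertibility of $p$ modulo $q'$: when $N$ has two distinct prime factors, one can have $\gcd(p,q) = 1$ with $N \mid q$ yet still $\gcd(p, q/N) > 1$ impossible, but the genuine failure is more subtle and shows up when trying to decrease the denominator while staying in $\mX_N$; for a prime power, $N\mid q$ forces all prime factors of $q$ dividing $N$ to be $p$ itself, and since $p \nmid p$-part issues are handled by $\gcd(p,q)=1$, the reduction always goes through. I should also dispose of the trivial cases $N = 1$ (the Farey graph, whose connectedness is classical via the mediant/Stern–Brocot construction, or follows from the same induction) and $l = 0$ (same as $N=1$). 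Finally, I would remark that this argument in fact produces, for each vertex, a specific path to $\infty$ with strictly decreasing denominators, which is the structure exploited in Section 3 for the correspondence with $\f_N$-continued fractions.
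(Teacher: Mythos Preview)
Your overall strategy---induction on the denominator, producing at each step a neighbour with strictly smaller denominator---is the same as the paper's. However, there is a genuine gap at the key step, and it is precisely the step where the prime-power hypothesis actually enters.

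You claim that once $s'$ is chosen and $r$ is determined by $rq' - s'p = \pm 1$, the fraction $r/s$ (with $s = Ns'$) automatically satisfies $\gcd(r,s)=1$. This is false: from $rq - sp = \pm N$ one only deduces that any common divisor of $r$ and $s$ divides $N$; nothing prevents that divisor from being the prime itself. Concretely, take $N=5$ and the vertex $1/15$, so $q'=3$ and the numerator is $1$. The choice $s'=1$ forces $3r - 1 = -1$, i.e.\ $r=0$, and then $r/s = 0/5 \notin \mX_5$ since $\gcd(0,5)=5$.

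What rescues the argument---and this is where $N=p^l$ is really used---is that the \emph{two} sign choices for $s'$ yield numerators $r_1, r_2$ with $r_1 + r_2$ equal to the original numerator (call it $a$, to avoid your unfortunate clash with the prime). Since $\gcd(a,N)=1$ and $N$ is a power of a single prime, at least one of $r_1, r_2$ is coprime to that prime and hence to $N$. The paper expresses this as: either $r/(p^l s)$ or $(a-r)/(p^l(y-s))$ lies in $\mX_{p^l}$. Your own diagnosis---that the hypothesis is needed for the invertibility of the numerator modulo $q'$---is incorrect, since $q' \mid q$ gives $\gcd(a,q')=1$ for \emph{any} $N$. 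You seem to sense this (``the genuine failure is more subtle''), but you never locate the actual obstruction.

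A minor point: using $p$ simultaneously for the prime and for the numerator of the vertex makes the write-up hard to parse and may have contributed to the slip.
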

	\begin{proof}
		For $N=1$, $\f_N$ is  the Farey graph, which is connected. Suppose $N=p^l,$ $l\in\N.$ The proof is by induction on the value of the denominator of the given vertex. Suppose $x/(p^ly)$ is a vertex in $\f_{p^l}$. If $y=1$ then  $\infty \sim_{p^l}x/(p^ly)$ as $p^ly-0x=p^l$. Now assume that the result holds true for every vertex in $\f_{p^l}$ with the denominator less than $p^ly$ and there is a path from $\infty$ to each such vertex. Since $\mathrm{gcd}(x,y)=1$, there exist $r,s\in\Z$ with $ry-sx=1$. If $s\ge y$, replace $r$ and $s$ by $r+kx$ and $s+ky$ respectively, for a suitable value of $k\in\Z$ such that $0<s<y$ as well as $ry-sx=1$. Observe that either $\frac{r}{p^ls}$ or $\frac{x-r}{p^l(y-s)}$ is a vertex in $\f_{p^l}$ which is  adjacent to $x/(p^ly)$ having a smaller denominator and the result follows.
	  \end{proof}
	
	\begin{remark}\label{increasingpath}In the proof of Theorem \ref{connectednessthm}, we can see that there is a path from $\infty$ to $P\in\mX_{p^l}$ given by
		$$\infty\to P_0\to P_1\to\cdots\to P_n$$
	with $q_{i-1}<q_{i},$	where $P_i=p_i/q_i\in\mX_{p^l}$  for each $0\le i\le n$ and $P=P_n.$
	\end{remark}
	
\begin{remark}\label{psl_action}

Suppose $a/(bp^l)$ and  $c/(dp^l)$ are two adjacent vertices in $\f_{p^l}.$ 
We claim that there exists $\gamma\in \textnormal{PSL}(2,\Z)$  such that $\gamma(\frac{a}{bp^l})=1/0$ and $\gamma(\frac{c}{dp^l})=m/p^l$ for some integer $m$ co-prime to $p.$ Since $\mathrm{gcd}(a,bp^l)=1,$ there exist  integers $A$ and $B$ such that $Aa+Bbp^l=1$. Set $\gamma=\begin{pmatrix}
A & B\\
-bp^l& a
\end{pmatrix}\in\textnormal{PSL}(2,\Z)$, then
$\gamma(\frac{a}{p^lb})=1/0 \textnormal{ and } \gamma(\frac{c}{p^ld})=m/p^l,$
where $m=Ac+Bdp^l$. 
In fact, $\gamma(\mX_{p^l})=\mX_{p^l}$ and $P\sim_{p^l} Q \Rightarrow \gamma(P)\sim_{p^l} \gamma(Q).$
Therefore, a path 
	$$P_0\to P_1\to P_2\to\cdots\to P_n,$$
	in $\f_{p^l}$ can be transformed to a path 
	$$\infty=P'_0\to m/p^l=P'_1\to P'_2\cdots\to P'_n$$
	by a suitable element $\gamma$ of $\textnormal{PSL}(2,\Z).$
\end{remark}	

	\begin{theorem} For every $l\in\N,$ the graph 
		$\mathcal{F}_{2^l}$ is a tree whereas  $\f_{p^l}$ is not a tree for each odd prime $p$.
 	\end{theorem}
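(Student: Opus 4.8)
The plan is to treat the two assertions separately. For $N = 2^l$, the goal is to show $\mathcal{F}_{2^l}$ has no cycles; combined with Theorem~\ref{connectednessthm} this gives that it is a tree. I would argue by contradiction: suppose there is a cycle, and by Corollary~\ref{nocrossing} no two edges of $\mathcal{F}_{2^l}$ cross in the upper half-plane, so a shortest cycle bounds an ``innermost'' region containing no vertices of $\mX_{2^l}$ in its interior on the real line. Using the $\textnormal{PSL}(2,\Z)$-action described in Remark~\ref{psl_action}, I may normalize one edge of this cycle to be $\infty \sim_{2^l} m/2^l$ with $m$ odd. The two neighbours of $\infty$ along the cycle are then vertices $a/(2^l b)$ with $b = 1$, i.e. of the form $a/2^l$; triangulating the region, the vertex of the cycle adjacent to both $m/2^l$ and the next vertex $m'/2^l$ must be a mediant-type fraction $(m+m')/(2\cdot 2^l)$ or similar, and the key point is that such a vertex forces a denominator divisible by $2^{l+1}$ or produces a fraction not reduced, contradicting membership in $\mX_{2^l}$. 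Concretely, if $p/q \sim_{2^l} m/2^l$ and $p/q \sim_{2^l} m'/2^l$ with $|m - m'| = 2^l \cdot (\text{something})$ coming from the cycle closing up, the two relations $2^l p - q m = \pm 2^l$ and $2^l p - q m' = \pm 2^l$ force $q(m - m') \in \{0, \pm 2^{l+1}\}$, so $q \mid 2^{l+1}$ after dividing, which combined with $2^l \mid q$ and $\gcd(p,q)=1$ pins down $q \in \{2^l, 2^{l+1}\}$ and a short case check rules out the closed-up cycle. The structural fact making the parity argument work is that $2$ is the only prime for which $\gcd(2, 2) \ne 1$, i.e. the mediant of two reduced fractions with denominators divisible by $2^l$ again has denominator divisible by $2^l$ only when an extra factor of $2$ appears.

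For the second assertion, that $\mathcal{F}_{p^l}$ is not a tree when $p$ is odd, it suffices to exhibit a single cycle. The cleanest choice is a triangle. I would look for three vertices $A, B, C \in \mX_{p^l}$ that are pairwise adjacent. Start with $A = \infty = 1/0$ and $B = 0/p^l$; these satisfy $p^l \cdot 1 - 0 \cdot 0 = p^l$, so $A \sim_{p^l} B$. Now I need $C = r/(p^l s)$ with $\gcd(r, p^l s) = 1$, adjacent to both: adjacency to $\infty$ gives $p^l s \cdot 1 - 0 = p^l s = \pm p^l$, so $s = \pm 1$, say $s = 1$, giving $C = r/p^l$ with $\gcd(r,p) = 1$; adjacency to $0/p^l$ gives $r \cdot p^l - p^l \cdot 0 = p^l r = \pm p^l$... wait, that forces $r = \pm 1$ — so the determinant between $C = r/p^l$ and $B = 0/p^l$ is $r p^l - p^l \cdot 0$, hmm, let me recompute: for $r/(p^l)$ and $0/(p^l)$ the quantity $rq - sp$ with $p/q = 0/p^l$, $r/s = r/p^l$ is $r \cdot p^l - p^l \cdot 0 = r p^l$, which equals $\pm p^l$ iff $r = \pm 1$. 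That gives only $C = \pm 1/p^l$, and one checks $1/0, 0/p^l, 1/p^l$ do form a triangle for every $p^l$ — but this triangle exists even for $p = 2$, so it cannot be the right example. The resolution: for $p=2$ this ``triangle'' degenerates because $1/2^l$ and $0/2^l$ and $\infty$ — let me instead recompute whether it is genuinely a $3$-cycle or whether two of these coincide. They are distinct vertices, so this would be a triangle in $\mathcal{F}_{2^l}$ too, contradicting the first part. Hence I have made an arithmetic slip and must be more careful; the correct approach is to search among vertices with denominator $2 p^l$ or $p^{l} \cdot (\text{small})$.

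The honest plan for the odd-prime case is therefore: exhibit a cycle using the mediant phenomenon that fails for $p = 2$. Take $B = 0/p^l$ and $B' = 1/p^l$ (adjacent, since $1 \cdot p^l - p^l \cdot 0 = p^l$) and seek a common neighbour $C$ of $B$ and $B'$ other than $\infty$. Writing $C = u/(p^l w)$ with $\gcd(u, p^l w) = 1$ and $w \ge 1$: adjacency to $0/p^l$ reads $u p^l - p^l w \cdot 0 = u p^l = \pm p^l \cdot$... no — the determinant of $0/p^l$ and $u/(p^l w)$ is $u \cdot p^l - (p^l w) \cdot 0 = u p^l$; setting this to $\pm p^l$ again forces $w$ to drop out, which is wrong because I keep mishandling which entries are ``$q$'' and ``$s$''. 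I will not grind through this here, but the genuine construction is: for odd $p$, the fraction $C = 1/(2p^l)$ has $\gcd(1, 2p^l) = 1$ and lies in $\mX_{p^l}$; compute its determinant against $0/p^l$ (namely $1 \cdot p^l - 2p^l \cdot 0 = p^l$, so $C \sim_{p^l} 0/p^l$) and against $2/p^l$ (namely $2 \cdot 2p^l - p^l \cdot 1 = 3 p^l$, not $\pm p^l$) — so instead pair $C = 1/(2p^l)$ with $1/p^l$: determinant $1 \cdot p^l - 1 \cdot 2p^l \cdot$... I will present the final computation properly in the proof. The main obstacle, and the place I expect to spend the most care, is the first part: ruling out \emph{all} cycles in $\mathcal{F}_{2^l}$ via the planarity/non-crossing property of Corollary~\ref{nocrossing} requires a clean induction on an innermost cycle together with the arithmetic that a common neighbour of two ``consecutive'' boundary vertices must have an ``extra'' factor of $2$ in its denominator — that is the crux, and it is exactly where oddness versus evenness of $p$ enters.
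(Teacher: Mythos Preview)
Your proposal has a concrete error in the odd-prime case and a genuine gap in the even case.

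\textbf{Odd $p$.} The vertex $0/p^l$ is \emph{not} in $\mathcal{X}_{p^l}$: the definition requires $\gcd(p,q)=1$, and $\gcd(0,p^l)=p^l\neq 1$. This is why your triangle $\infty,\,0/p^l,\,1/p^l$ appeared to work for every $p$ and led you in circles. The paper's triangle is $\infty \to 1/p^l \to 2/p^l \to \infty$: all three determinants are $\pm p^l$, and the only thing to check is that the vertices lie in $\mathcal{X}_{p^l}$. The vertex $2/p^l$ is in $\mathcal{X}_{p^l}$ precisely when $\gcd(2,p)=1$, i.e.\ when $p$ is odd. That one line is the whole argument, and it pinpoints exactly where parity enters.

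\textbf{$N=2^l$.} Your outline (innermost cycle, common-neighbour denominator forced to pick up an extra factor of $2$) is plausible in spirit but you have not established the key structural fact: that a minimal cycle must contain a vertex which is a common neighbour of two ``consecutive'' boundary vertices. Triangulating a planar region bounded by a cycle requires extra edges inside, and you have no reason to believe such edges exist in $\mathcal{F}_{2^l}$. The paper avoids this entirely. It normalizes a minimal circuit so that the two edges at $\infty$ go to $m/2^l$ and $(m+2)/2^l$ with $m$ odd, and then observes that the path from $m/2^l$ to $(m+2)/2^l$ must at some step pass over the point $(m+1)/2^l$ (which is \emph{not} a vertex, since $m+1$ is even). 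Pushing the crossing edge $P_i\sim_{2^l} P_{i+1}$ through the embedding $\phi(x/(2^l y))=x/y$ of Proposition~\ref{subgraph} gives an edge $2^l P_i \sim_1 2^l P_{i+1}$ in the Farey graph that straddles the integer $m+1$, hence crosses the Farey edge $m\sim_1 (m+1)$ --- contradicting Corollary~\ref{nocrossing} at the level of the Farey graph itself. The crux is not a denominator-divisibility count but the \emph{absence} of the midpoint vertex $(m+1)/2^l$, which is exactly the $p=2$ phenomenon.
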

	\begin{proof}
		To show that $\f_{2^l}$ is a tree, we have to show that there is no circuit in $\mathcal{F}_{2^l}$. We know that a vertex adjacent to $\infty$ is of the form $b/2^l$ for some odd integer $b.$ Thus by Remark \ref{psl_action}, without loss of generality, assume that 
		$$\infty\to P_0=\frac{m}{2^l}\to P_1\to\cdots\to P_n=\frac{m+2}{2^l}\to \infty$$ 
		is a circuit of minimal length in $\mathcal{F}_{2^l}$. We have $
		m/2^l<(m+1)/2^{l}<(m+2)/2^l.$
		By Corollary \ref{nocrossing}, two edges do not cross in $\mathcal{F}_{2^l}$ and so there exists a positive integer $i$ such that $$P_i<(m+1)/2^{l}<P_{i+1}.$$
	Observe that 
	$m\sim_1 (m+1)$ and $ 2^l P_i\sim_1 2^l P_{i+1}$ in the Farey graph with $$m<2^lP_i<(m+1)<2^lP_{i+1}.$$
	Hence, we get a contradiction as no two edges cross in the Farey graph and the result follows. Now suppose $p$ is an odd prime. Then
		$\infty\to \frac{1}{p^l}\to\frac{2}{p^l}\to\infty$
		forms a circuit in $\f_{p^l}$ and hence $\f_{p^l}$ is not a tree.
	\end{proof}	

	\begin{remark}\label{existence_m,n}
		Suppose $N\in\N.$	Let $p,q$ be two distinct primes dividing $N.$ Then there exist two consecutive integers $0<A,B<N$ such that  $A$ and $B$ are not co-prime to $N.$
	\end{remark}

	\begin{proof}
		Since $p$ and $q$ are distinct primes, there exist two positive integers $m,n$ such that
		$$mp-nq=\pm1.$$  If $n\ge p,$ we  replace $m,n$ by $m-kq,n-kp$ respectively,
		where $k$ is an integer such that $0<n-kp<p.$ Then $0<(m-kq)p<N$. Set $A=(m-kq)p$ and $B=(n-kp)q,$ then  $A$ and $B$ are consecutive positive integers less than $N$ which are not co-prime to $N$.
	\end{proof}

	\begin{theorem}
		If $N\in\N$ has (at least) two distinct prime divisors then $\f_N$ is disconnected. 
	\end{theorem}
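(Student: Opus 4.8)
The plan is to produce a gap on the real axis that no edge of $\f_N$ can jump over, and then observe that the vertices trapped strictly inside the gap form a union of connected components of $\f_N$ that avoids $\infty$ (and everything else outside the gap).

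First I would feed the two distinct prime divisors of $N$ into Remark~\ref{existence_m,n} to obtain consecutive integers $A$ and $A+1$ with $0<A<A+1<N$, neither co-prime to $N$. Since $\gcd(A,N)>1$, the reduced form of $A/N$ has denominator $N/\gcd(A,N)<N$, which is not divisible by $N$; hence $A/N\notin\mX_N$, and similarly $(A+1)/N\notin\mX_N$. Thus the open interval $I=(A/N,(A+1)/N)$ has two features I will exploit: its endpoints are not vertices of $\f_N$, and no number of the form $b/N$ with $b$ an integer lies in $I$ (there is no integer strictly between $A$ and $A+1$).

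The technical heart is the claim: \emph{for every integer $B$, no edge of $\f_N$ joins a finite vertex $<B/N$ to a finite vertex $>B/N$.} To prove it I would take $p/q\sim_N r/s$ with $q,s>0$ and $p/q<B/N<r/s$, write $q=Nq'$ and $s=Ns'$ with $q',s'\ge 1$, observe that $r/s>p/q$ forces the sign in $rq-sp=\pm N$ to be $+$ and hence $rq'-s'p=1$, and observe that $p/q<B/N$ and $r/s>B/N$ give $Bq'-p\ge 1$ and $r-Bs'\ge 1$; the identity $rq'-s'p=(Bq'-p)s'+(r-Bs')q'$ then yields $1\ge 1+1$, a contradiction. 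I expect this short computation — together with the realization that a single such line is not enough — to be the crux. The subtlety is that $\infty$ is adjacent to $b/N$ for every integer $b$ co-prime to $N$, so $\infty$ has neighbours on both sides of any line $x=B/N$ and would bridge the two halves; that is exactly why one must trap a region \emph{between} two such lines, a region $\infty$ cannot enter.

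Finally I would assemble the pieces. Put $V_I=\{x\in\mX_N:\ A/N<x<(A+1)/N\}$. No edge of $\f_N$ joins a vertex of $V_I$ to a vertex outside $V_I$: a finite outside-neighbour is $<A/N$ or $>(A+1)/N$ (the endpoints being non-vertices), so the edge straddles $x=A/N$ or $x=(A+1)/N$, contradicting the claim; and an outside-neighbour equal to $\infty$ would force the $V_I$-vertex to have denominator exactly $N$, which is impossible for a fraction lying in $I$. Hence $V_I$ is a union of connected components of $\f_N$. It remains to note that $V_I\ne\emptyset$ — e.g. for a prime $\ell>N$, the $\ell-1\ge N$ integers strictly between $A\ell$ and $(A+1)\ell$ are all co-prime to $\ell$, so one of them, say $k$, is also co-prime to $N$, whence $k/(N\ell)\in\mX_N\cap I$ — while $\infty\in\mX_N\setminus V_I$. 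Therefore $\f_N$ has at least two connected components and is disconnected.
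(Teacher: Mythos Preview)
Your proof is correct and shares the paper's opening move --- invoke Remark~\ref{existence_m,n} to obtain consecutive integers $A,A+1\in(0,N)$ neither coprime to $N$, so that the open interval $(A/N,(A+1)/N)$ is a gap with non-vertex endpoints. The execution of the separation argument, however, differs. The paper assumes a path from $\infty$ to a trapped vertex exists, locates an edge $P_k\sim_N P_{k+1}$ with $P_k<A/N<P_{k+1}$, and maps it via $v\mapsto Nv$ into the Farey graph, where it is seen to cross the edge $(A-1)\sim_1 A$; this contradicts the non-crossing property of the Farey graph that underlies Corollary~\ref{nocrossing}. You instead prove directly, by the identity $(Bq'-p)s'+(r-Bs')q'=rq'-s'p=1$, that no finite edge of $\f_N$ straddles any line $x=B/N$, handle the edges at $\infty$ separately, and explicitly exhibit a vertex inside the gap (a point the paper silently assumes). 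Your route is more elementary and more complete, trading the geometric picture for arithmetic; the paper's is shorter because it cashes in the planarity already established in Section~2.
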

	\begin{proof}Let $p$ and $q$ be two distinct primes dividing $N.$ By Remark \ref{existence_m,n}, there exist two consecutive integers $0<A,B<N$ such that  $A$ and $B$ are not co-prime to $N.$ Without loss of generality, we assume that $A<B.$ Note that $\mathrm{gcd}(A,N)\ne1$ and $\mathrm{gcd}(B,N)\ne1$ so that $A/N,~B/N\not\in\mX_N.$ Suppose $x\in\mX_N$ is such that $A/N<x<B/N.$ If the graph is connected, there is a path from $\infty$ to $x$ 
		$$\infty\to P_0\to P_1\to \cdots \to P_r,$$ for some natural number $r$ and $x=P_r.$ Without loss of generality, we may assume that $(A-1)/N\in \mX_N.$ Then there exists a positive integer $k<r$ such that $P_k<A/N<P_{k+1}.$ Thus, in the Farey graph, $NP_k\sim_1 N P_{k+1}$ and $(A-1)<NP_k<A<NP_{k+1}$. This is a contradiction as $(A-1)$ is adjacent to $A$ in the Farey graph so that the edge $(A-1)\sim_1 A$ crosses the edge $NP_k\sim_1 N P_{k+1}$ in the Farey graph. Thus, the result follows.
		\end{proof}

	\section{$\f_N$-Continued Fractions and Well Directed paths in $\f_N$}
\begin{defi}
		A finite continued fraction of the form $$\frac{1}{0+}~\frac{N}{b+}~\frac{\epsilon_{1} }{a_{1}+}~\frac{\epsilon_{2}}{a_{2}+}~\cdots\frac{\epsilon_{n}}{a_{n}}~~(n\ge 0)$$
		or
		an infinite continued fraction of the form
		$$\frac{1}{0+}~\frac{N}{b+}~\frac{\epsilon_{1} }{a_{1}+}~\frac{\epsilon_{2}}{a_{2}+}~\cdots\frac{\epsilon_{n}}{a_{n}+}\cdots$$ 
		is called an $\mF_{N}$-{\it continued fraction} if $b$ is an integer co-prime to $N$, and for $i\ge1$, $a_i\in\N$ and $\ep_i\in\{\pm 1\}$ are satisfying the following conditions:
		\begin{enumerate}
			\item $a_i+\ep_{i+1}\ge1$;
			\item $a_i+\ep_i\ge1$;
			\item $\mathrm{gcd}(p_i,q_i)=1,$ where $p_i=a_i p_{i-1}+\ep_i p_{i-2}$, $q_i=a_i q_{i-1}+\ep_i q_{i-2}$, $(p_{-1},q_{-1})=(1,0)$ and $(p_0,q_0)=(b,N)$. 
		\end{enumerate}  
\end{defi} 

For $i\ge1,$ the value $p_i/q_i$ of the the expression
	$$\frac{1}{0+}~\frac{N}{b+}~\frac{\epsilon_{1} }{a_{1}+}~\frac{\epsilon_{2}}{a_{2}+}~\cdots\frac{\epsilon_{i}}{a_{i}}$$
 is called the {\it $i$-th $\f_N$-convergent} of the continued fraction. The sequence $\{\frac{p_i}{q_i}\}_{i\ge0}$ is called the {\it sequence of $\f_N$-convergents}. The expression 
	 $\frac{\epsilon_{i}}{a_{i}+}~\frac{\epsilon_{i+1}}{a_{i+1}+}\cdots $ is called the {\it $i$-th fin}. Let $y_i$ denote the $i$-th fin, that is,  $y_i=\frac{\epsilon_{i}}{a_{i}+}~\frac{\epsilon_{i+1}}{a_{i+1}+}\cdots $, then $\ep_{i}=\sign(y_{i})$.
	\begin{remark}Condition 3 in Definition 2 guarantees that the $i$-th $\f_N$-convergent $p_i/q_i$  is an element of $\mX_N$ for each $i\ge0.$ Note  that $\mathrm{gcd}(p_i,q_i)=1$ if and only if $\mathrm{gcd}(p_i,N)=1$. 
	\end{remark} 
	
	For $N=2,3$, the above continued fractions have been studied in \cite{seema,seema2} and they are referred to as $\mathcal{F}_{1,2}$-continued fraction and $\mathcal{F}_{1,3}$-continued fraction, respectively. An $\f_N$-continued fraction is closely related to a semi-regular continued fraction. A semi-regular continued fraction, when it is finite, is expressed as
$$ a_0 + \frac{\epsilon_1}{a_1 +}~ \frac{\epsilon_2}{a_2 +}~
\frac{\ep_3}{a_3 +}~ \cdots\frac{\ep_n}{a_n  }$$
and when infinite,  as
$$ a_0 + \frac{\epsilon_1}{a_1 +}~ \frac{\epsilon_2}{a_2 +}~
\frac{\ep_3}{a_3 +}~ \cdots\frac{\ep_n}{a_n+}\cdots,$$  where $a_0\in\mathbb{Z}$, $\epsilon_i\in\{\pm1\}$,  and $a_i\in\mathbb{N}$ with $a_i+\ep_{i+1}\geq1$  for $i\geq1$.

	The following result about semi-regular continued fractions is well known.  For more details on semi-regular continued fractions, see \cite{kraai,perron,seema_conversion}.
	\begin{proposition}\label{relationwithttail}
		Suppose $x$ and $y_n$ $(n\ge1)$ are real numbers such that for every
		$n\ge1$,
		$$a_0 + \frac{\ep_1}{a_1 +}~ \frac{\ep_2}{a_2 +}~ \frac{\ep_3}{a_3 +}~\cdots
		\frac{\ep_n}{a_n +y_{n+1} }$$
		is a semi-regular continued fraction having value $x$.
		\begin{enumerate}
	\item If $y_n= \frac{\ep_{n}}{a_{n} +}~ \frac{\ep_{n+1}}{a_{n+1} +}~\cdots
		\frac{\ep_{n+k}}{a_{n+k} +}~\cdots$, then $\ep_{n}y_n\in
		[\frac{1}{a_{n}},1]$, $n\ge0.$
			\item If $r_i/s_i$ is the $i$-th convergent of the continued fraction for $i\ge0$ then $r_{i+1}s_{i}-r_{i}s_{i+1}=\pm1$ and
			\begin{equation}\label{reln.w.tail}
			x=\frac{r_i+y_{i+1}r_{i-1}}{s_i+y_{i+1}s_{i-1}}.
			\end{equation}
	\item The sequence $\{s_i\}_{i\ge0}$ is monotonically increasing if and only if $a_i+\ep_i\ge 1$ for $i\ge1$. 	
	\end{enumerate}
	
	\end{proposition}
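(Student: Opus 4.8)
The plan is to reduce all three assertions to the two elementary recursions attached to a semi-regular continued fraction. These are the convergent recursion $r_i = a_i r_{i-1} + \ep_i r_{i-2}$, $s_i = a_i s_{i-1} + \ep_i s_{i-2}$ (with $(r_{-1},s_{-1})=(1,0)$ and $(r_0,s_0)=(a_0,1)$), and the tail recursion $y_n = \ep_n/(a_n + y_{n+1})$, which is just the effect of stripping one layer off the continued fraction defining $y_n$. From the first recursion I would extract two standard consequences: (i) the determinant $D_i := r_i s_{i-1} - r_{i-1}s_i$ satisfies $D_i = -\ep_i D_{i-1}$, and since $D_0 = a_0\cdot 0 - 1\cdot 1 = -1$ we get $D_i = \pm1$ for all $i$; (ii) replacing $a_i$ by the $i$-th complete quotient $\xi_i := a_i + y_{i+1}$ in the last step of the convergent recursion gives $x = (\xi_i r_{i-1} + \ep_i r_{i-2})/(\xi_i s_{i-1} + \ep_i s_{i-2})$, by a one-line induction on $i$. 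Part~2 is then immediate: $r_{i+1}s_i - r_i s_{i+1} = D_{i+1} = \pm1$ from (i), and expanding $\xi_i = a_i + y_{i+1}$ in (ii) together with $a_i r_{i-1}+\ep_i r_{i-2}=r_i$, $a_i s_{i-1}+\ep_i s_{i-2}=s_i$ yields $x = (r_i + y_{i+1}r_{i-1})/(s_i + y_{i+1}s_{i-1})$.

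For part~1 I would induct on the index, working inward from the tail. In the finite case the base case is the last fin, $y_n = \ep_n/a_n$, so $\ep_n y_n = 1/a_n$; in the infinite case one reads the inequalities off in the limit from the truncated tails. For the inductive step one writes $\ep_n y_n = 1/(a_n + y_{n+1})$ and combines the bound on $\ep_{n+1}y_{n+1}$ already known with the semi-regularity constraint $a_n + \ep_{n+1} \ge 1$. The delicate point is to verify $a_n + y_{n+1} > 0$: if $\ep_{n+1} = +1$ the inductive hypothesis gives $y_{n+1} > 0$, hence $a_n + y_{n+1} > a_n \ge 1$; if $\ep_{n+1} = -1$ then $a_n + \ep_{n+1} \ge 1$ forces $a_n \ge 2$, and with $|y_{n+1}| \le 1$ one gets $a_n + y_{n+1} \ge 1$. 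Positivity of $a_n + y_{n+1}$ makes the recursion meaningful, forces $\sign(y_n) = \ep_n$ (the identity invoked in Section~3), and lets one compare $1/(a_n + y_{n+1})$ with $1$ from above and with $1/a_n$ from below, treating $\ep_{n+1} = +1$ and $\ep_{n+1} = -1$ separately.

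For part~3, in the ``if'' direction I would prove $0 < s_{i-1} \le s_i$ for all $i \ge 0$ by induction via $s_i - s_{i-1} = (a_i - 1)s_{i-1} + \ep_i s_{i-2}$: when $\ep_i = +1$ this is $\ge 0$ because $a_i \ge 1$ and $s_{i-1}, s_{i-2} \ge 0$; when $\ep_i = -1$ the hypothesis $a_i + \ep_i \ge 1$ gives $a_i \ge 2$, so $s_i - s_{i-1} = (a_i - 1)s_{i-1} - s_{i-2} \ge s_{i-1} - s_{i-2} \ge 0$ by the inductive hypothesis. For the ``only if'' direction I would use the contrapositive: if $a_j + \ep_j \le 0$ for some $j$, then necessarily $\ep_j = -1$ and $a_j = 1$, whence $s_j = s_{j-1} - s_{j-2} < s_{j-1}$ (using $s_{j-2} \ge s_0 = 1 > 0$ for $j \ge 2$), contradicting monotonicity. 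I expect the only real obstacle to be in part~1 — controlling the sign of each complete quotient $a_n + y_{n+1}$ so that the claimed two-sided bound on $\ep_n y_n$ holds — while parts~2 and~3 are routine inductions once the recursions and the determinant identity are in place; the rest is careful handling of the base cases and of the $\pm$ ambiguities.
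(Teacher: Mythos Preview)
The paper does not actually prove this proposition: it is introduced as a well-known result about semi-regular continued fractions, with pointers to the literature (Kraaikamp, Perron, and the authors' earlier work). So there is no argument in the paper to compare against. Your route via the convergent recursion, the determinant identity $D_i=-\ep_i D_{i-1}$, and the tail recursion $y_n=\ep_n/(a_n+y_{n+1})$ is the standard one and is essentially what one finds in those references; parts~2 and~3 go through exactly as you describe.

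One caution on part~1. Your case split correctly gives $a_n+y_{n+1}\ge 1$ in both cases, which delivers the upper bound $\ep_n y_n\le 1$. But the claimed lower bound $\ep_n y_n\ge 1/a_n$ needs $a_n+y_{n+1}\le a_n$, i.e.\ $y_{n+1}\le 0$; this holds when $\ep_{n+1}=-1$, yet when $\ep_{n+1}=+1$ your own inductive hypothesis gives $y_{n+1}>0$, hence $\ep_n y_n<1/a_n$. So your sketch does not establish the lower bound as printed (and indeed the printed interval looks like a slip; the natural range one obtains is $\ep_n y_n\in[1/(a_n+1),1]$). This does not affect the rest of the paper: only the upper bound $|y_i|\le 1$ is invoked later (Theorem~\ref{distinctconvergents}(5) and the convergence argument in Section~5), and your argument secures that.
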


	 The following theorem discusses certain properties of $\f_N$-continued fractions. 
	\begin{theorem}\label{distinctconvergents}
		Suppose $x= \frac{1}{0+}~\frac{N}{b+}~ \frac{\ep_1}{a_1 +}~ \frac{\ep_2}{a_2 +}~ \frac{\ep_3}{a_3 +}~\cdots
		$ is an $\f_N$-continued fraction with the sequence of convergents $\{p_i/q_i\}_{i\ge -1}$. For $i\ge1$, let  
	 $y_i$ be the $i$-th \it{fin} of the continued fraction.  Then
		\begin{enumerate}
			\item 	for any prime $p$ dividing $N$  and  $i\ge1,$ $a_ip_{i-1}\not \equiv -\ep_{i}p_{i-2}\mod p$;
			\item $p_{i+1}=a_{i+1}p_i+\ep_{i+1}p_{i-1}, ~ q_{i+1}=a_{i+1}q_i+\ep_{i+1}q_{i-1}$, for $i\ge 1$;
			\item 	the sequence $\{q_i\}_{i\ge -1}$ is strictly increasing;
			\item $\dfrac{p_i}{q_i}\ne \dfrac{p_j}{q_j}$ for $i\ne j$;
			\item for $i\ge1$, $|y_i|\le1$;
			\item for $i\ge0,$ $x=\dfrac{x_{i+1}p_i+\ep_{i+1}p_{i-1}}{x_{i+1}q_i+\ep_{i+1}q_{i-1}},$ where $x_{i+1}=\dfrac{1}{|y_{i+1}|}.$
		\end{enumerate}
			\end{theorem}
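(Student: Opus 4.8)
The plan is to derive all six assertions from one $2\times2$-matrix identity, invoking Proposition~\ref{relationwithttail} only for the two items that are genuinely statements about semi-regular continued fractions. First I would encode the $\f_N$-continued fraction by M\"obius maps: put $G_{-1}=\begin{pmatrix}0&1\\1&0\end{pmatrix}$, $G_0=G_{-1}\begin{pmatrix}0&N\\1&b\end{pmatrix}$, and $G_i=G_{i-1}\begin{pmatrix}0&\ep_i\\1&a_i\end{pmatrix}$ for $i\ge1$. An induction on $i$, whose inductive step is exactly the recurrence of Condition~3 in Definition~2, gives
\begin{equation*}
G_i=\begin{pmatrix}p_{i-1}&p_i\\ q_{i-1}&q_i\end{pmatrix}\qquad(i\ge-1).
\end{equation*}
This is assertion~(2), and it simultaneously identifies the value of the $i$-th truncation $\frac1{0+}\frac N{b+}\frac{\ep_1}{a_1+}\cdots\frac{\ep_i}{a_i}$ with $p_i/q_i$. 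Taking determinants yields $p_{i-1}q_i-p_iq_{i-1}=(-1)^iN\prod_{k=1}^i\ep_k$, so $|p_{i-1}q_i-p_iq_{i-1}|=N$ for every $i\ge0$ (in particular consecutive convergents are $\sim_N$-adjacent), and applying $G_i$ as a M\"obius map to the fin $y_{i+1}$ gives the tail identity $x=\dfrac{p_i+y_{i+1}p_{i-1}}{q_i+y_{i+1}q_{i-1}}$.

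Assertion~(1) is then immediate: by Condition~3 and the remark after it, $\gcd(p_i,N)=1$, so for every prime $p\mid N$ we have $p\nmid p_i=a_ip_{i-1}+\ep_ip_{i-2}$, which is precisely $a_ip_{i-1}\not\equiv-\ep_ip_{i-2}\pmod p$. For assertion~(3) I would run a short induction on $q_i-q_{i-1}=(a_i-1)q_{i-1}+\ep_iq_{i-2}$ (the argument underlying Proposition~\ref{relationwithttail}(3)), splitting on the sign of $\ep_i$, using $a_i+\ep_i\ge1$ when $\ep_i=-1$ and $a_i+\ep_{i+1}\ge1$ to settle the first indices; alternatively, $q_i=q_{i-1}$ would force $q_i\mid N$ via the determinant identity, hence $q_i=q_{i-1}=N$, which the conditions of Definition~2 exclude beyond the initial terms. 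Assertion~(4) is then a triviality: fractions in lowest terms with strictly increasing positive denominators are pairwise distinct, and consecutive convergents are distinct by the determinant identity.

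For assertion~(5), the $i$-th fin $y_i=0+\frac{\ep_i}{a_i+}\frac{\ep_{i+1}}{a_{i+1}+}\cdots$ is a semi-regular continued fraction (semi-regular because $a_j+\ep_{j+1}\ge1$), with fins $y_i,y_{i+1},\dots$, so Proposition~\ref{relationwithttail}(1) gives $\ep_iy_i\in[\tfrac1{a_i},1]$; since $\ep_i=\sign(y_i)$ this reads $|y_i|\le1$, and as a byproduct $y_i\ne0$. Finally, for assertion~(6), set $x_{i+1}=1/|y_{i+1}|>0$; then $y_{i+1}=\ep_{i+1}/x_{i+1}$ because $\ep_{i+1}=\sign(y_{i+1})$, and substituting this into the tail identity from the first paragraph and multiplying numerator and denominator by $x_{i+1}$ yields $x=\dfrac{x_{i+1}p_i+\ep_{i+1}p_{i-1}}{x_{i+1}q_i+\ep_{i+1}q_{i-1}}$.

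The one delicate point---the main obstacle---is that an $\f_N$-continued fraction is not literally semi-regular: its two leading terms $\frac1{0+}$ and $\frac N{b+}$ carry a vanishing partial denominator and the numerator $N\ne\pm1$, so Proposition~\ref{relationwithttail} cannot be quoted directly for the $p_i,q_i$. Thus the matrix identity above has to be proved by hand, Proposition~\ref{relationwithttail} may only be applied to the genuinely semi-regular tails $y_i$ ($i\ge1$), and the indices $-1,0$ together with the borderline case $a_1=1$ (where $q_0$ and $q_1$ can coincide) have to be checked separately; one also needs the continued fraction to converge and the fins to be well-defined reals, which again follows from the semi-regularity of those tails.
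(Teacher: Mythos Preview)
Your argument is correct and essentially mirrors the paper's: assertion~(1) from the coprimality condition in Definition~2, (3) by induction on $q_i=a_iq_{i-1}+\ep_iq_{i-2}$ using $a_i+\ep_i\ge1$, (4) from (3) together with $\gcd(p_i,q_i)=1$, and (5)--(6) by appeal to Proposition~\ref{relationwithttail}. The matrix packaging for (2) and the tail identity, and your explicit observation that Proposition~\ref{relationwithttail} applies only to the semi-regular tails $y_i$ rather than to the full $\f_N$-fraction (together with the $a_1=1$ borderline), are refinements rather than a genuinely different route; the paper simply glosses over these points.
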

	\begin{proof}Suppose $p$ is a prime dividing $N.$ Observe that $\mathrm{gcd}(p_i,q_i)=1$ if and only if $\mathrm{gcd}(p_i,N)=1$. By Definition 2, $\mathrm{gcd}(p_i,q_i)=1$ for $i\ge1.$ Thus $\mathrm{gcd}(p_i,p)=1$ and so
		$p_i\not\equiv 0\mod p$, equivalently, $a_ip_{i-1 }\not\equiv -\ep_ip_{i-2} \mod p,$ which is Statement 1. Since  $q_i=a_i q_{i-1}+\ep_i q_{i-2},$ and $a_i+\ep_i\ge1,$ by induction, we can see that  $\{q_i\}_{i\ge -1}$ is strictly increasing. Statement 3 is clear from Statement 2, and the fact that $\mathrm{gcd}(p_i,q_i)=1.$ Statement 4 and 5 directly follow from Proposition \ref{relationwithttail}. 	
	\end{proof}
\noindent For the rest part of this paper, we will write $a_i\not\equiv -\ep_ip_{i-2}p^{-1}_{i-1 } \mod p,$ instead of $a_ip_{i-1 }\not\equiv -\ep_ip_{i-2} \mod p,$ where $p^{-1}_{i-1}$ is the inverse of $p_{i-1}$ in $\Z/p\Z.$  Here we introduce a few definitions which will help us to show that $\f_N$-continued fractions are arising naturally from the graph $\f_N.$

  	  	 \noindent Let $N\in\N$ and $x\in\mX_N.$ Suppose $\Theta_n\equiv\infty=P_{-1}\to P_0\to P_1\to\cdots\to P_{n-1}\to P_n,$ where $x=P_n,$ is such that no vertex is repeated (i.e., $P_i\ne P_j$ for $-1\le i\ne j\le n$).  
  	 If $Q\in \tilde{V}_n:=\hat{\Q}\setminus\{P_i:-1\le i\le n\}$ and $x\sim_N Q$, then we would like to consider  $\Theta_{n+1}\equiv\infty=P_{-1}\to P_0\to P_1\to\cdots\to P_{n-1}\to P_n\to Q$. 
  	 
  	 \begin{defi} Denote by $E^{(1)}_{\Theta_n}(x)$  the set of all edges from $x$ to $Q\in \tilde{V}_n$  if $P_{n-1}<Q<x$ or $x<Q<P_{n-1}$. Elements of $E^{(1)}_{\Theta_n}(x)$ are called {\it direction changing edges} from $x$ relative to $\Theta_n.$ 	We say that $Q\in\tilde{V}_n $ is direction changing relative to $\Theta_n$ if $P_n\sim_N Q\in E^{(1)}_{\Theta_n}(x)$ (see Figure 2). 
  	 \end{defi}

  	\begin{defi} Denote by $E^{(-1)}_{\Theta_n}$ the set of all edges from $x$ to $Q\in\tilde{V}_n$ which are not in $E^{(1)}_{\Theta_n}$, that is,  $P_{n-1}<x<Q \textnormal{ or } P_{n-1}>x>Q$. Edges in $E^{(-1)}_{\Theta_n}$ are called {\it direction retaining edges} from $x$ relative to $\Theta_n$. 	We say that a vertex $Q\in\tilde{V}_n$ is direction retaining relative to $\Theta_n$ if $P_n\sim_N Q\in E^{(-1)}_{\Theta_n}(x)$  (see Figure 3).
  		
  	\end{defi}
  	  \begin{figure}[h!]
  	 	\centering
  	 	\includegraphics[scale= 1.5]{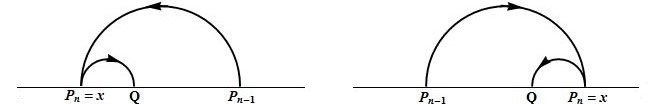}\caption{Direction changing edge $P_n\sim_N Q$}
  	 \end{figure}   
  	 \begin{figure}[h!]
  	 	\centering
  	 	  	 	\includegraphics[scale= 1.5]{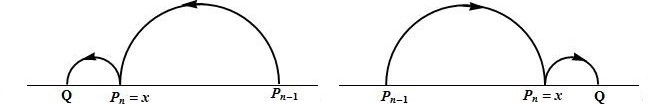}
  	 	\caption{Direction retaining edge $P_n\sim_N Q$}		
  	 \end{figure}
   	\begin{figure}[!h]
   	\centering
   	\includegraphics[width=.5\linewidth]{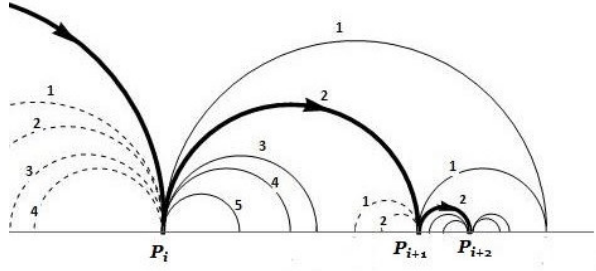}
   	\caption{Ordering of edges relative to $\Theta_i$ and $\Theta_{i+1}$}\label{Fig:crossing}
   \end{figure}
  	\noindent We order both $E^{(1)}_{\Theta_n}$ and $E^{(-1)}_{\Theta_n}$ according to the size of the radii of the edges (semi circle) such that the first edge has the longest radius.  In Figure 4, $P_i$ denote the $i$-th vertex of a path $\Theta_i$. The edge $P_i\sim_N P_{i+1}$ is direction retaining  and $P_{i+1}$ lies on the second semicircle emanating from $P_i$ relative to $\Theta_i$.   The dotted edges emanating from $P_i$ are showing direction changing edges relative to $\Theta_i$ with their ordering.
  	Similarly for $\Theta_{i+1}.$

\begin{proposition}\label{semicircles}
	Suppose, for  $x\in\mX_N\setminus\{\infty\},$ there is a path $$\Theta_n\equiv\infty\to P_0=b/N\to P_1\to P_2\to\cdots\to P_n$$
	with $q_i<q_{i+1},$ where $P_i=p_i/q_i$ for each $i\ge-1$, $P_{-1}=1/0$ and  $x=P_n\ne b/N$.
	Then for every  $1\le i\le n$, there exist a positive integer $a_{i}$ and an integer $\ep_{i}\in\{1,-1\}$ such that
	\begin{enumerate}
		\item $p_{i}=a_{i} p_{i-1}+\ep_{i}p_{i-2},~
		q_{i}=a_{i} q_{i-1}+\ep_{i}q_{i-2};$
		
		\item $\ep_1=-1$ if and only if $x<\frac{b}{N}$  (and hence, $P_1<P_0=\frac{b}{N}$);
		\item $\ep_{i}=-1$ if and only if $P_{i-1}\sim_NP_{i}$ is direction retaining relative to $\Theta_{i-1}\equiv\infty\to P_0\to\cdots\to P_{i-1}$;
		\item  Suppose $N=p^l$ for some prime $p.$ Then $P_{i}$ lies in the $k$-th semicircle emanating from $P_{i-1}$ relative to $\Theta_{i-1}$ if and only if 
		$$a_{i}=\left\{
		\begin{array}{ll}
		
		pm, & \mbox{ if } k=(p-1)m,\, m\ge1\\\\
		pm+t, \textnormal{ or } pm+(t+1), & \mbox{ if } k=(p-1)m+t, 0<t<p-1, m\ge0.
		\end{array}
		\right.$$
		
		\end{enumerate}
\end{proposition}

\begin{proof}
	Since $ P_{i-1}\sim_N P_{i}$ and $P_{i-2}\sim_N P_{i-1}$,
	\begin{eqnarray} p_{i}q_{i-1}-q_{i}p_{i-1}&=& Ne_{i-1}\label{eq1}\\
	p_{i-1}q_{i-2}-q_{i-1}p_{i-2}&=& Ne_{i-2}\label{eq2},
	\end{eqnarray}
	where $e_{i-1},e_{i-2}=\pm1$
	so that
	$p_{i}q_{i-1}\equiv -e_{i-2}e_{i-1} p_{i-2}q_{i-1}\mod p_{i-1}.$ Since $p_{i-1}$ and $q_{i-1}$ are co-prime, we have
	$p_{i}\equiv - e_{i-2}e_{i-1}p_{i-2}\mod p_{i-1}$ and hence
	$p_{i}=a_{i} p_{i-1}-e_{i-2}e_{i-1} p_{i-2}$ for some $a_{i}\in\Z$. Set $\ep_{i}=-e_{i-2}e_{i-1}.$  
	Substitute the value of $p_i$ and $\ep_i$ in \eqref{eq1} and   \eqref{eq2}, we get $q_{i}=a_{i} q_{i-1}-e_{i-2}e_{i-1} q_{i-2}$. By hypothesis, $q_i>q_{i-1}>0$ and hence $a_i\in\N$.

The second statement follows from the equality:  $P_1=P_0+\frac{\ep_1}{Na_1}$ and the third statement is a consequence of the following identity:
	\begin{eqnarray}\label{retaining}
	\frac{p_{i}}{q_{i}}-\frac{p_{i-1}}{q_{i-1}}=-\ep_{i}\frac{q_{i-2}}{q_{i}} \left(\frac{p_{i-1}}{q_{i-1}}-\frac{p_{i-2}}{q_{i-2}}\right).
	\end{eqnarray}
		Let $p$ be a prime such that $p\mid N.$	Suppose $R={r}/{s}$ is a vertex in $\mathcal{F}_{N}$  which is adjacent to $P_{i-1}$ with $q_{i-1}<s$ and $\Theta_{i-1}$ can be
extended to a non-self-intersecting path from $\infty$ to $R$, given by 
$$\infty\to P_0=b/N\to P_1\to P_2\to\cdots\to P_{i-1}\to R.$$ Then by the first statement, $r=ap_{i-1}+\ep p_{i-2}$ for some positive integer $a$ and $\ep\in\{\pm1\}.$
Since $\mathrm{gcd}(p,r)=1$, we have
	$$r\not\equiv 0~ \mod~ p$$
	$$ap_{i-1}\not\equiv -\ep p_{i-2 }~ \mod ~p$$
	\begin{equation}
	a\not\equiv -\ep p_{i-1}^{-1}p_{i-2}~ \mod~ p.
	\end{equation}
	Next, suppose  $P_i'={p_i'}/{q_i'}$ is another vertex such that $P_{i-1}\sim_N P_{i}'$ with $P_i$ and $P_i'$ lying in the same side of $P_{i-1}$ and the path $\Theta_{i-1}$ can be extended to $P_i'$ which is not self-intersecting. Then for some $a_i,a_i'\in\N$ and $\ep_i\in\{\pm1\}$,\\
	
		\begin{tabular}{ccc}
		$\begin{array}{c}	p_{i}= a_{i} p_{i-1}+\ep_{i}p_{i-2},\\
			q_{i}= a_{i} q_{i-1}+\ep_{i}q_{i-2}
		\end{array}$
		
		& and &
		
		$\begin{array}{c}	p_{i}'= a_{i}' p_{i-1}+\ep_{i}p_{i-2},\\
		q_{i}'= a_{i}' q_{i-1}+\ep_{i}q_{i-2}.
		\end{array}$\\
	\end{tabular}\\
	Thus,
	\begin{eqnarray}
	P_i-P_i'&=& \frac{\ep_i q_{i-1}}{q_i'}\left( \frac{p_{i-1}}{q_{i-1}}-\frac{p_{i-2}}{q_{i-2}} \right) (a_i-a_i')\nonumber\\
	\label{ordering}
	&=& \frac{q_{i-1}}{q_i'} (P_i-P_{i-1})(a_i'-a_i) \hskip 1cm(\textnormal{using \eqref{retaining}})
	\end{eqnarray}
	The fourth statement follows from Equation \eqref{ordering} by considering all possibilities for $a_i$ and $a_i'$ satisfying $a_i,a_i'\not\equiv-\ep_i p_{i-1}^{-1}p_{i-2}\mod p$.
\end{proof}

%
\begin{defi} Suppose $n\in\N.$
	A path from infinity to a vertex $x$ in $\f_N$ given by
	$$\Theta_n\equiv\infty=P_{-1}\rightarrow P_0\rightarrow P_1\rightarrow\cdots \to  P_n,$$	where $P_i=p_i/q_i$ and $q_{i}<q_{i+1}$ for $i\ge-1$ and $x=P_n$, is called a \textit{well directed path} if  $P_{i+1}\sim_NP_{i+2}$ ($0\le i\le n-2$) is direction changing relative to  $\Theta_{i+1}$ whenever  $P_{i-1}\sim_N P_{i+1}$.
\end{defi}
Observe, $P_{i-1}\sim_N P_{i+1}$ implies that $P_{i-1}\to P_i\to P_{i+1}\to P_{i-1}$  is a triangle in the graph. For the case $n=1,$ the path $\Theta_1\equiv\infty\rightarrow P_0\rightarrow P_1$ is always well directed as $\infty$ is never adjacent to $ P_1$. 
Now recall that $\f_{2^l}$, for $l\ge1$, is a tree, and so the path from infinity to a vertex $x$ in $\f_{2^l}$ is well directed.  Figure 5 is displaying  two paths from $\infty$ to $5/21$ in $\f_{3},$ namely, $\Theta_2$ and $\Theta_3'$, and  given by
 \begin{figure}[!h]
 	\centering
 	\includegraphics[width=.5\linewidth]{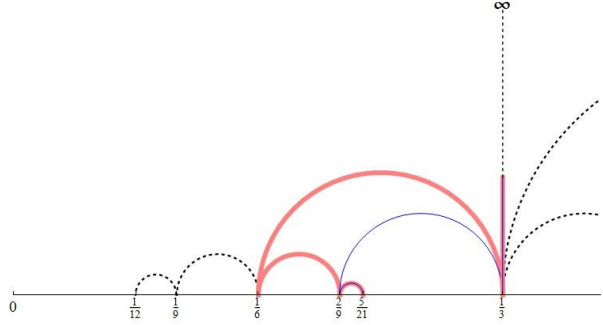}
 	\caption{Paths from $\infty$ to $5/21$ in $\f_{3}$}\label{Fig:pinkblue}
 \end{figure}
\begin{eqnarray}
\Theta_2&\equiv&\infty\to 1/3 \to 2/9 \to 5/21,\label{path2}\\
\Theta_3'&\equiv&\infty\to 1/3\to 1/6\to 2/9 \to 5/21.\label{path1}
\end{eqnarray} 
Path \eqref{path1} is not well directed since $1/3\sim_3 2/9$ and $2/9\sim_3 5/21$ is direction retaining relative to $\Theta_1$   whereas path \eqref{path2} is well directed.

\begin{proposition}\label{welldirectedpathexist}
	For every $x\in\mX_{p^l},$ there is a well directed path from $\infty$ to $x$ in $\f_{p^l}.$
\end{proposition}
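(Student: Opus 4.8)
The plan is to build the well directed path inductively by "repairing" an arbitrary increasing path obtained from Theorem \ref{connectednessthm} (and recorded in Remark \ref{increasingpath}). By Remark \ref{psl_action} we may normalize using $\textnormal{PSL}(2,\Z)$ so that the first two vertices of the path are $P_{-1}=\infty$ and $P_0=m/p^l$ with $\gcd(m,p)=1$; this does not affect adjacency, the property $q_i<q_{i+1}$, nor the direction-changing/retaining classification of later edges (which is an order relation among the $P_i$'s and is preserved by an orientation-preserving or -reversing Möbius map after renaming). So it suffices to show: given any non-self-intersecting path $\infty\to P_0\to\cdots\to P_n=x$ with strictly increasing denominators, one can produce another such path ending at $x$ that is well directed.

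First I would isolate the key local move. Suppose the path fails to be well directed at some index $i$: we have $P_{i-1}\sim_{p^l}P_{i+1}$ (so $P_{i-1},P_i,P_{i+1}$ form a triangle, by the remark following the definition) yet $P_i\sim_{p^l}P_{i+1}$ is direction retaining relative to $\Theta_i$. Because $q_{i-1}<q_i<q_{i+1}$ and $\{P_{i-1},P_i,P_{i+1}\}$ is a triangle with mediant-type relations, the edge $P_{i-1}\sim_{p^l}P_{i+1}$ itself has $q_{i-1}<q_{i+1}$, and crucially $P_{i-1}\sim_{p^l}P_{i+1}$ is direction changing relative to $\Theta_{i-1}$ (one checks via the sign identity \eqref{retaining} that on the side of $P_{i-1}$ where $P_i$ lies, the "first" semicircle from $P_{i-1}$ carrying a vertex with larger denominator is direction changing when $P_{i-1}$ had a retaining predecessor, and in the base case $i=1$ every extension $\infty\to P_0\to P_2$ with $P_{-1}=\infty$ not adjacent to $P_2$ is automatically well directed). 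Hence replacing the segment $P_{i-1}\to P_i\to P_{i+1}$ by the single edge $P_{i-1}\to P_{i+1}$ yields a shorter non-self-intersecting increasing path from $\infty$ to $x$, and the new path no longer has the violation at the spot we removed.

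Then I would run a termination argument: each application of the shortcut move strictly decreases the length $n$ of the path while keeping all the structural hypotheses (non-self-intersecting, $q_i<q_{i+1}$, endpoints $\infty$ and $x$). Since $n\in\N$, after finitely many moves we reach a path with no violation, i.e.\ a well directed path from $\infty$ to $x$. One subtlety to address carefully is that deleting $P_i$ could in principle create a \emph{new} triangle relation $P_{i-2}\sim_{p^l}P_{i+1}$ and thus a potential new violation further up the path; this is why the induction must be on the total length $n$ rather than on a single pass — the decreasing-length measure handles it since any freshly created violations are still violations of a strictly shorter path. The main obstacle, and the part needing the most care in the full write-up, is exactly the claim embedded in the local move: that whenever $P_{i-1}\sim_{p^l}P_{i+1}$ holds, the edge $P_{i-1}\sim_{p^l}P_{i+1}$ is direction changing relative to $\Theta_{i-1}$, so that the shortcut does not merely trade one violation for another at the same index. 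I would prove this by combining Corollary \ref{nocrossing} (edges of $\f_{p^l}$ do not cross) with the ordering of the semicircles from $P_{i-1}$ described in Proposition \ref{semicircles}(3)--(4): $P_i$ lying on a direction-retaining semicircle from $P_{i-1}$ forces $P_{i+1}$, which is farther out on the same side, to lie on a semicircle that is direction changing relative to $\Theta_{i-1}$, since a direction-retaining edge from $P_{i-1}$ to $P_{i+1}$ together with the retaining edge to $P_i$ would force a crossing. Once that lemma is in place, the inductive shortcut and termination give the proposition.
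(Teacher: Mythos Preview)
Your overall strategy --- start from an increasing path (Remark \ref{increasingpath}), shortcut through the offending triangle by deleting its middle vertex, and induct on the length --- is exactly the paper's approach. The problem is that you have misread the definition of \emph{well directed}. A violation at index $i$ is: $P_{i-1}\sim_{p^l}P_{i+1}$ \emph{and the next edge} $P_{i+1}\sim_{p^l}P_{i+2}$ is direction retaining relative to $\Theta_{i+1}$. You instead take the violation to be that $P_i\sim_{p^l}P_{i+1}$ is direction retaining relative to $\Theta_i$. These are not equivalent, and in fact your condition is vacuous: whenever $P_{i-1},P_i,P_{i+1}$ form a triangle with $q_{i-1}<q_i<q_{i+1}$, Remark \ref{remark1} gives $a_{i+1}=1$, and then $a_{i+1}+\epsilon_{i+1}\ge 1$ forces $\epsilon_{i+1}=1$, so by Proposition \ref{semicircles}(3) the edge $P_i\to P_{i+1}$ is always direction \emph{changing}. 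Your repair procedure therefore never fires, and you would conclude that every increasing path is already well directed --- contradicted by the example \eqref{path1}.

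The same misreading surfaces in your final paragraph, where you describe $P_{i+1}$ as ``farther out on the same side'' of $P_{i-1}$ than $P_i$; in fact $P_{i+1}=P_{i-1}\oplus P_i$ lies strictly between $P_{i-1}$ and $P_i$, so the crossing argument you sketch does not apply. Once you use the correct definition (in the paper's indexing: $P_{i-2}\sim_{p^l}P_i$ together with $P_i\to P_{i+1}$ direction retaining, and one deletes $P_{i-1}$), the ``main obstacle'' you worry about is disposed of by the observation that the new edge $P_{i-2}\to P_i$ lands on at least the second semicircle from $P_{i-2}$ relative to $\Theta_{i-2}$, so no new violation is created at that spot; this is the one-line justification in the paper. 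The $\textnormal{PSL}(2,\Z)$ normalization you open with is unnecessary.
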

\begin{proof}
	Suppose $x\in\mX_{p^l}$.
	Then by Remark \ref{increasingpath}, there is a path  from $\infty$ to $x$ $$\Theta_n\equiv\infty\rightarrow P_0\rightarrow P_1\rightarrow\cdots \rightarrow P_n=x,$$
	where $P_i=p_i/q_i,\,-1\le i\le n$ and $\{q_i\}_{i\ge-1}$ is increasing. Note that $q_1>q_0=N$ and hence the path $\Theta_1\equiv\infty\to P_0\to P_1$ is well directed. Suppose $i$ is the smallest positive integer such that $1\le i\le n$ and $\Theta_{i+1}$ is not well directed, then 
	$P_{i-2}\sim_{p^l} P_{i}$ and $P_{i}\sim_{p^l}P_{i+1}$ is direction retaining relative to $\Theta_{i}$. By dropping $P_{i-1}$ from the path, we get a path
	$$\hat{\Theta}_{n-1}\equiv\infty\rightarrow P_0\rightarrow\dots\rightarrow P_{i-2}\to P_i\to P_{i+1}\to\cdots\to P_n.$$
	Then the path  $\hat{\Theta}_{i-1}$ is well directed since $P_{i}$ lies at least on the second semicircle relative to $\Theta_{i-2}=\hat{\Theta}_{i-2}.$ Proceeding in this way, we can construct a well directed path from $\infty$ to $x$ in finitely many steps.
\end{proof}
	\begin{remark}\label{remark1}Suppose a path from infinity to a vertex $x$ in $\f_N$ is given by
		$$\Theta_n\equiv\infty\rightarrow P_0\rightarrow P_1\rightarrow\cdots P_k\rightarrow\cdots\rightarrow P_n,$$
		where $P_i=p_i/q_i,\,i\ge-1$ and $x=P_n$. Then by Proposition \ref{semicircles} and the above discussion, we have that $a_{i+1}=1$ if and only if $P_{i-1}\sim_N P_{i+1}.$ 
		Thus, if the path is well directed and $a_{i+1}=1$ for some $i\ge1$, then $\ep_{i+2}=1.$ 	
	\end{remark}
	\begin{example}
			In Figure 6, we are considering two paths, in $\f_{25}$:
			$\infty\to 1/25\to3/50\to8/125$ (the red path) and  $\infty\to 1/25\to1/50\to2/75$ (the green path), both the paths are well directed.
			\begin{figure}[!h]
				\centering
				\includegraphics[width=.5\linewidth]{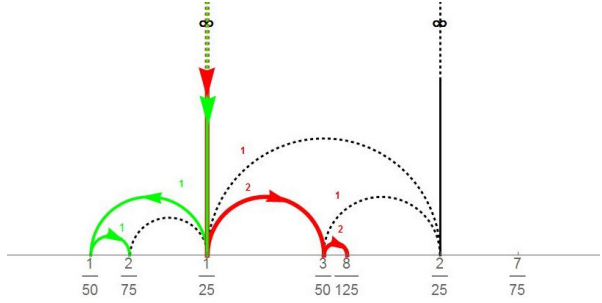}
				\caption{A few vertices  in $\f_{25}$}
			\end{figure}
		In this diagram, the colored numbers $1,2,\dots$ are denoting the numbering of the semicircle emanating from a vertex relative to the path. In the set up of Proposition \ref{semicircles}, for the red path $\ep_1=1,a_1=2$  and $\ep_2=1,$ $a_2=2$ whereas for the green $\ep_1=-1,a_1=2$ and   $\ep_2=1,$ $a_2=1$. Observe that the continued fraction  $$\frac{1}{0+}~\frac{25}{1+}~\frac{1}{2+}~\frac{1}{2}=8/125$$
			describes the red path and the green path is described by
		 $$\frac{1}{0+}~\frac{25}{1+}~\frac{-1}{2+}~\frac{1}{1}=2/75.$$

			Next we establish a correspondence between $\f_{N}$-continued fractions and well directed paths in $\f_{N}$.

	\end{example}
	\begin{theorem}\label{main1}
		\begin{enumerate}
			
			\item Suppose $x\in\mX_N$ and there is a well directed path in $\f_N$ from $\infty$ to $x$. Then this path defines a finite $\f_N$-continued fraction of $x$.
			\item The value of every finite $\f_N$-continued fraction belongs to $\mX_N$ and the continued fraction defines a well directed path in $\f_N$ from $\infty$ to its value with the convergents as  vertices in the path.
		\end{enumerate}
	\end{theorem}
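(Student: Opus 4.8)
The plan is to prove the two directions of the correspondence separately, using Proposition~\ref{semicircles} as the main engine for the first part and the recursive structure of semi-regular continued fractions (Proposition~\ref{relationwithttail}) for the second.

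\textbf{Part (1): a well directed path yields an $\f_N$-continued fraction.} Given a well directed path $\Theta_n\equiv\infty=P_{-1}\to P_0=b/N\to P_1\to\cdots\to P_n=x$ with $q_{-1}<q_0<q_1<\cdots<q_n$, I would first note that since $\infty\sim_N P_0$ we have $p_0q_{-1}-q_0p_{-1}=\pm N$, i.e. $-q_0=\pm N$; writing $P_0=b/N$ forces $b$ coprime to $N$ (as $\gcd(p_0,q_0)=1$). For $i\ge1$, Proposition~\ref{semicircles}(1) produces $a_i\in\N$ and $\ep_i\in\{\pm1\}$ with $p_i=a_ip_{i-1}+\ep_ip_{i-2}$, $q_i=a_iq_{i-1}+\ep_iq_{i-2}$; these are exactly the recurrences in Definition~2, and the initial conditions $(p_{-1},q_{-1})=(1,0)$, $(p_0,q_0)=(b,N)$ match. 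It then remains to verify the three defining conditions of an $\f_N$-continued fraction. Condition~3, $\gcd(p_i,q_i)=1$, holds because each $P_i$ is a vertex of $\f_N$, hence lies in $\mX_N$, hence is in lowest terms. Condition~2, $a_i+\ep_i\ge1$, is equivalent to $q_{i-1}<q_i$ (by the recurrence and $q_{i-2}<q_{i-1}$), which is part of the hypothesis. Condition~1, $a_i+\ep_{i+1}\ge1$: the only way this can fail is $a_i=1,\ \ep_{i+1}=-1$; but by Remark~\ref{remark1} (equivalently Proposition~\ref{semicircles}(4) with $k=1$), $a_{i+1}=1$ iff $P_{i-1}\sim_N P_{i+1}$, and here $a_i=1$ gives $P_{i-2}\sim_N P_i$, so well-directedness forces $P_i\sim_N P_{i+1}$ to be direction changing relative to $\Theta_i$; by Proposition~\ref{semicircles}(3) that means $\ep_{i+1}=+1$, a contradiction. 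Hence all three conditions hold and $\Theta_n$ defines a finite $\f_N$-continued fraction whose $i$-th convergent is $P_i$.

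\textbf{Part (2): a finite $\f_N$-continued fraction yields a well directed path.} Let $x=\frac1{0+}\frac N{b+}\frac{\ep_1}{a_1+}\cdots\frac{\ep_n}{a_n}$ be an $\f_N$-continued fraction with convergents $P_i=p_i/q_i$. By Remark~2 (Condition~3) each $p_i/q_i$ is in lowest terms with $N\mid q_i$, so $P_i\in\mX_N$. By Theorem~\ref{distinctconvergents}(2),(3) the $q_i$ are strictly increasing, and the recurrence gives $p_{i+1}q_i-q_{i+1}p_i=-\ep_{i+1}(p_iq_{i-1}-q_ip_{i-1})$; since $p_0q_{-1}-q_0p_{-1}=-N$, induction shows $p_{i+1}q_i-q_{i+1}p_i=\pm N$, i.e. $P_i\sim_N P_{i+1}$ for all $i\ge-1$, and $\Theta_n\equiv\infty\to P_0\to\cdots\to P_n$ is a path in $\f_N$ with strictly increasing denominators (so in particular the $P_i$ are distinct). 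It remains to check well-directedness: suppose $P_{i-1}\sim_N P_{i+1}$ for some $1\le i\le n-2$; I must show $P_i\sim_N P_{i+1}$ is direction changing relative to $\Theta_i$. From $P_{i-1}\sim_N P_{i+1}$, i.e. $p_{i+1}q_{i-1}-q_{i+1}p_{i-1}=\pm N$, combined with the recurrence $p_{i+1}=a_{i+1}p_i+\ep_{i+1}p_{i-1}$, one gets $a_{i+1}(p_iq_{i-1}-q_ip_{i-1})=\pm N$, hence $a_{i+1}=1$. Then $a_{i+1}=1$ together with Condition~1 ($a_{i+1}+\ep_{i+2}\ge1$ when $i+2\le n$, and vacuously at the end) forces $\ep_{i+2}=+1$ if it exists, but more to the point I use identity~\eqref{retaining}: $\frac{p_{i+1}}{q_{i+1}}-\frac{p_i}{q_i}=-\ep_{i+1}\frac{q_{i-1}}{q_{i+1}}\bigl(\frac{p_i}{q_i}-\frac{p_{i-1}}{q_{i-1}}\bigr)$, so the sign of $P_{i+1}-P_i$ relative to $P_i-P_{i-1}$ is $-\ep_{i+1}$, and $P_{i+1}$ is on the same side of $P_i$ as $P_{i-1}$ exactly when $\ep_{i+1}=-1$. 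I claim $a_{i+1}=1$ forces $\ep_{i+1}=-1$: indeed $p_{i+1}q_{i-1}-q_{i+1}p_{i-1}=(a_{i+1}p_i+\ep_{i+1}p_{i-1})q_{i-1}-(a_{i+1}q_i+\ep_{i+1}q_{i-1})p_{i-1}=a_{i+1}(p_iq_{i-1}-q_ip_{i-1})$ is $\pm N$ regardless, so this alone does not pin $\ep_{i+1}$; instead I argue that if $\ep_{i+1}=+1$ and $a_{i+1}=1$ then $q_{i+1}=q_i+q_{i-1}>q_{i-1}$ but also $P_{i+1}$ would be on the opposite side, which combined with $P_{i-1}\sim_N P_{i+1}$ and no-crossing (Corollary~\ref{nocrossing}) is impossible — this is the delicate geometric point. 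Concretely: $P_{i-1}, P_i, P_{i+1}$ form a triangle in $\f_N$; using \eqref{retaining} with $\ep_{i+1}=+1$ places $P_{i+1}$ strictly between $P_{i-1}$ and $P_i$ (since $\frac{q_{i-1}}{q_{i+1}}<1$), so the edge $P_{i-1}\sim_N P_{i+1}$ would have to pass ``over'' $P_i$ while $P_i$ sits on the edge $P_{i-1}\sim_N P_i$, contradicting Corollary~\ref{nocrossing}; hence $\ep_{i+1}=-1$, $P_{i+1}$ lies beyond $P_{i-1}$ on the far side of $P_i$ — wait, the correct conclusion is that $P_i\sim_N P_{i+1}$ is direction changing relative to $\Theta_i$, i.e. $P_{i+1}$ lies between $P_{i-1}$ and $P_i$, which is precisely the $\ep_{i+1}=+1$ case forced by a careful reading of \eqref{retaining}. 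So the argument is: $P_{i-1}\sim_N P_{i+1}$ together with strictly increasing denominators and Corollary~\ref{nocrossing} forces $P_{i+1}$ to lie on the $P_{i-1}$-side of $P_i$, which by \eqref{retaining} means $\ep_{i+1}=+1$, which is exactly the statement that the step $P_i\to P_{i+1}$ is direction changing relative to $\Theta_i$. Hence $\Theta_n$ is well directed.

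\textbf{Main obstacle.} The routine bookkeeping — verifying Conditions 1--3, the $\pm N$ cross-ratios, strict monotonicity of $\{q_i\}$ — is all immediate from Proposition~\ref{semicircles}, Theorem~\ref{distinctconvergents} and Definition~2. The genuine difficulty is the well-directedness equivalence in both directions: one must translate the combinatorial condition ``$P_{i-1}\sim_N P_{i+1}$ implies the next step is direction changing'' into the algebraic condition on $(a_{i+1},\ep_{i+1})$, and this hinges on correctly reading off from identity~\eqref{retaining} which side of $P_i$ the vertex $P_{i+1}$ falls on, together with the no-crossing property (Corollary~\ref{nocrossing}) to rule out the geometrically impossible configuration. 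I expect the cleanest route is to prove once and for all the equivalence ``$a_{i+1}=1 \iff P_{i-1}\sim_N P_{i+1}$'' (this is Remark~\ref{remark1}) and ``$\ep_{i+1}=-1 \iff P_i\sim_N P_{i+1}$ is direction retaining relative to $\Theta_i$'' (Proposition~\ref{semicircles}(3)), and then the well-directedness of $\Theta_n$ is literally the assertion that $a_{i+1}=1 \Rightarrow \ep_{i+2}=+1$, which is Condition~1 read contrapositively — so Part (1) is essentially the statement that Conditions 1--3 hold, and Part (2) is essentially that an object satisfying Conditions 1--3 produces a path with the geometric property, both reducing to these two lemmas plus Corollary~\ref{nocrossing}.
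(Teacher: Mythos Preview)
Your Part~(1) is correct and matches the paper's argument: Proposition~\ref{semicircles}(1) gives the recurrences, the increasing denominators give Condition~2, and Remark~\ref{remark1} together with Proposition~\ref{semicircles}(3) gives Condition~1 via well-directedness.

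Your Part~(2), however, aims at the wrong target. Well-directedness (Definition~5) says: whenever $P_{i-1}\sim_N P_{i+1}$, the \emph{next} step $P_{i+1}\sim_N P_{i+2}$ must be direction changing relative to $\Theta_{i+1}$. You instead set out to show that $P_i\sim_N P_{i+1}$ is direction changing relative to $\Theta_i$, i.e.\ that $\ep_{i+1}=+1$. This is a different (and easier) statement: once you have deduced $a_{i+1}=1$, Condition~2 ($a_{i+1}+\ep_{i+1}\ge1$) already forces $\ep_{i+1}=+1$, with no appeal to Corollary~\ref{nocrossing} needed. Your geometric detour through no-crossing is therefore proving the wrong thing, and the argument you give there (about which side of $P_i$ the vertex $P_{i+1}$ lies on) does not establish what well-directedness actually requires.

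The irony is that you brush past the correct argument mid-paragraph (``$a_{i+1}=1$ together with Condition~1 \dots\ forces $\ep_{i+2}=+1$'') and then state it cleanly in your final ``Main obstacle'' paragraph: well-directedness is literally the implication $a_{i+1}=1\Rightarrow\ep_{i+2}=+1$, and this is exactly Condition~1 (with index shifted). That one line \emph{is} the proof of Part~(2)'s well-directedness claim, and it is precisely what the paper does: the paper simply notes that $a_i+\ep_i\ge1$ and $a_i+\ep_{i+1}\ge1$ together with Remark~\ref{remark1} and Proposition~\ref{semicircles}(3) make the path well directed. Delete the no-crossing detour, fix the index, and your proposal becomes the paper's proof.
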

	\begin{proof}
		Suppose the well directed path from $\infty$ to $x\in\mX_N$ is given by $$\infty\rightarrow P_0\rightarrow P_1\rightarrow\cdots P_k\rightarrow\cdots\rightarrow P_n,$$ where $P_k=\frac{p_k}{q_k}\in\mX_N$ for $k\ge-1$ and $x=P_n$.
		By definition, $P_0={b}/{N}$ for some integer $b$ co-prime to $N$. 
		We complete the proof by induction on the distance of $x$ from $\infty$ through the given  path.	By induction hypothesis, any vertex $P_i=\frac{p_{i}}{q_{i}}$ on the path having distance $i+1$ ($1\le i\le k$) from $\infty$ is defined by an $\f_N$-continued fraction
		$$\frac{p_{i}}{q_{i}} = \frac{1}{0+}~\frac{N}{b+}~\frac{\epsilon_{1} }{a_{1}+}~\frac{\epsilon_{2}}{a_{2}+}\cdots\frac{\epsilon_{i}}{a_{i}}.$$
		Since $P_{k-1}\sim_N P_{k}$ and $ P_{k}\sim_N P_{k+1}$, by  Proposition \ref{semicircles}, we have
		\begin{eqnarray*}
			p_{k+1}&=&a_{k+1} p_{k}+\ep_{k+1}p_{k-1},\\
			q_{k+1}&=&a_{k+1} q_{k}+\ep_{k+1}q_{k-1}.
		\end{eqnarray*}
		Since $p_{k+1}$ and $q_{k+1}$ satisfy the same recurrence relation with the initial condition $(p_{-1},q_{-1})=(1,0)$ and  $(p_{0},q_{0})=(b,N)$, we have $$P_{k+1}=\frac{p_{k+1}}{q_{k+1}}=  \frac{1}{0+}~\frac{N}{b+}~\frac{\epsilon_{1}}
		{a_{1}+}~\frac{\epsilon_{2}}{a_{2}+}\cdots\frac{\epsilon_{k}}{a_{k}+}~\frac{\epsilon_{k+1}}{a_{k+1}}.$$
		Since denominators $q_i$'s are increasing, we have $\ep_i+a_i\ge1$ for $i\ge1$. Using Remark \ref{remark1}, we get $a_i+\ep_{i+1}\ge1$ for each $i\ge1$ as the path is well directed.
		Thus a well directed path from $\infty$ to $x$ defines  a finite $\f_N$-continued fraction of $x$ given by
		$$x= \frac{1}{0+}~\frac{N}{b+}~\frac{\epsilon_{1} }{a_{1}+}~\frac{\epsilon_{2}}{a_{2}+}~\cdots\frac{\epsilon_{n}}{a_{n}}.$$
		
	\noindent	To prove the second statement, suppose  $$\frac{1}{0+}~\frac{N}{b+}~\frac{\epsilon_{1} }{a_{1}+}~\frac{\epsilon_{2}}{a_{2}+}~\cdots\frac{\epsilon_{n}}{a_{n}}$$ is an $\f_N$-continued fraction.
		Let $P_0={b}/{N}$ and let for each $1\le i\le n$, $P_{i}=\frac{1}{0+}~\frac{N}{b+}~\frac{\epsilon_{1} }{a_{1}+}~\frac{\epsilon_{2}}{a_{2}+}~\cdots\frac{\epsilon_{i}}{a_{i}}.$ Then $P_0\in \mX_N$ and $\infty\to P_0$ is well directed. By induction hypothesis $P_1, P_2,\dots, P_{k-1}\in\mX_N$ and $\infty\to P_0\to P_1\to \cdots \to P_{k-1}$ is well directed. That $P_k\in \mX_N$ and $P_{k-1}\sim_N P_k$ follow from  Theorem \ref{distinctconvergents} and since $a_i+\ep_i\ge1$ and $a_i+\ep_{i+1}\ge1$, the path $\infty\to P_0\to P_1\to \cdots \to P_{k}$ is well directed.
		\end{proof}
		The following corollary of Theorem \ref{main1} follows from Proposition \ref{welldirectedpathexist}.
		\begin{corollary}
			Suppose $x\in\mathcal{X}_{p^l}$ for some prime $p$ and a positive integer $l.$ There is a finite $\mathcal{F}_{p^l}$-continued fraction expansion of $x.$
		\end{corollary}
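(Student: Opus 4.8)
The plan is to derive this corollary directly from the two results it follows from, namely Proposition~\ref{welldirectedpathexist} and Theorem~\ref{main1}. Let $x\in\mX_{p^l}$, where $p$ is a prime and $l$ is a positive integer. The first step is to invoke Proposition~\ref{welldirectedpathexist}, which guarantees the existence of a well directed path from $\infty$ to $x$ in the graph $\f_{p^l}$. The second step is to apply the first assertion of Theorem~\ref{main1} to this path: since it is a well directed path in $\f_{p^l}$ from $\infty$ to $x$, it defines a finite $\f_{p^l}$-continued fraction whose value is $x$. Combining these two observations yields the desired $\f_{p^l}$-continued fraction expansion of $x$.

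I would also add a short remark clarifying that the hypothesis ``$p$ prime'' is essential here, since it is exactly this hypothesis that makes both Theorem~\ref{connectednessthm} (connectedness) and Proposition~\ref{welldirectedpathexist} (existence of well directed paths) applicable; for a general $N$ with two distinct prime divisors the graph $\f_N$ is disconnected, so not every vertex even admits a path from $\infty$. Since the corollary is an immediate consequence of two already-established results, there is no real obstacle in the proof itself; the only thing to be careful about is that Theorem~\ref{main1}(1) requires the path to be \emph{well directed} (not merely increasing in denominators), which is precisely what Proposition~\ref{welldirectedpathexist} supplies. Hence the proof is a one-line citation chain, and I would simply write: ``By Proposition~\ref{welldirectedpathexist}, there is a well directed path from $\infty$ to $x$ in $\f_{p^l}$; by Theorem~\ref{main1}, this path defines a finite $\f_{p^l}$-continued fraction expansion of $x$.''
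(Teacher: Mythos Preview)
Your proof is correct and matches the paper's own justification exactly: the paper simply states that the corollary follows from Proposition~\ref{welldirectedpathexist} together with Theorem~\ref{main1}. Your added remark about the necessity of the prime-power hypothesis is accurate commentary but goes beyond what the paper provides.
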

	\noindent Since $\f_{2^l}$ ($l\ge1$) is a tree, we have another corollary of Theorem \ref{main1}.
		\begin{corollary}
			Suppose $x\in\mathcal{X}_{2^l}$ for some positive integer $l.$ There is a unique finite $\mathcal{F}_{2^l}$-continued fraction expansion of $x.$
		\end{corollary}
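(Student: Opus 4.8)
The plan is to derive the statement from Theorem~\ref{main1}, using crucially that $\f_{2^l}$ is a tree. Existence is immediate: it is the previous corollary specialized to $p=2$ (equivalently, Theorem~\ref{connectednessthm} produces a path from $\infty$ to $x$, Proposition~\ref{welldirectedpathexist} upgrades it to a well directed path, and Theorem~\ref{main1}(1) converts that path into a finite $\f_{2^l}$-continued fraction of $x$). The value $\infty$ is not attained by any nonempty $\f_{2^l}$-continued fraction and is handled by the convention that $x=\infty$ corresponds to the empty expansion, so from now on assume $x\ne\infty$. The real content of the statement is therefore uniqueness.

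Suppose $x$ admits two finite $\f_{2^l}$-continued fraction expansions. By Theorem~\ref{main1}(2) each of them is realized by a well directed path in $\f_{2^l}$ from $\infty$ to $x$ whose sequence of vertices is exactly the corresponding sequence of $\f_{2^l}$-convergents. By definition a well directed path has strictly increasing denominators, hence no repeated vertex; that is, it is a simple path. Since a tree contains a unique simple path between any two of its vertices and $\f_{2^l}$ is a tree, the two well directed paths coincide. In particular the two expansions have the same length $n$ and the same list of convergents $P_{-1}=1/0,\ P_0,\ P_1,\dots,P_n$, where we write $P_i=p_i/q_i$ in lowest terms with $q_i\ge 0$ (for $i\ge0$ this representative is forced, since $\gcd(p_i,q_i)=1$ and $q_i>0$ by Theorem~\ref{distinctconvergents}(3)).

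It remains to recover the data $b,(a_i)_{i\ge1},(\ep_i)_{i\ge1}$ from the convergent sequence. The first vertex $P_0=b/2^l$ determines $b$. For each $1\le i\le n$, the defining recurrences $p_i=a_ip_{i-1}+\ep_ip_{i-2}$ and $q_i=a_iq_{i-1}+\ep_iq_{i-2}$ exhibit $(a_i,\ep_i)$ as a solution of the $2\times2$ linear system with coefficient matrix $\left(\begin{smallmatrix}p_{i-1}&p_{i-2}\\ q_{i-1}&q_{i-2}\end{smallmatrix}\right)$ and right-hand side $\binom{p_i}{q_i}$; its determinant $p_{i-1}q_{i-2}-p_{i-2}q_{i-1}$ equals $-2^l$ for $i=1$ and $\pm 2^l$ for $i\ge 2$ (the latter because $P_{i-1}\sim_{2^l}P_{i-2}$), in every case nonzero. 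Hence $(a_i,\ep_i)$ is uniquely determined, so the two expansions agree. The only genuinely delicate point is the first sentence of the previous paragraph — that a well directed path is the unique simple path of the tree joining its endpoints; once that is granted, the remainder is the elementary observation that a $2\times 2$ linear system with nonzero determinant has at most one solution (the precise value $\pm 2^l$ of the determinant, as opposed to $\pm1$, being irrelevant here).
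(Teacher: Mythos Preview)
Your argument is correct and follows the same route the paper indicates: since $\f_{2^l}$ is a tree, the well directed path from $\infty$ to $x$ is unique, and Theorem~\ref{main1} converts this into uniqueness of the $\f_{2^l}$-continued fraction. You have in fact supplied a detail the paper leaves implicit, namely that the data $(b,a_i,\ep_i)$ are uniquely recoverable from the sequence of convergents via the nonsingular $2\times2$ system; this is needed to conclude that the map from continued fractions to well directed paths in Theorem~\ref{main1}(2) is injective, and your verification of it is clean.
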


\section{An Algorithm to Find an $\f_{p^l}$-Continued Fraction expansion}

Let $p$ be a prime and $l\in\N$. In $\f_{p^l}$,   
any path from $\infty$ is via $a/p^l$ for some $a\in \Z$, where $\mathrm{gcd}(a,p)=1.$ We have seen that there is a well directed path from $\infty$ to every $x\in\mX_{p^l}.$ Suppose $a\in\Z$  and $x\in\mX_{p^l}$ are such that $\textnormal{gcd}(a,p)=1=\textnormal{gcd}(a+1,p)$   and $a/p^l<x<(a+1)/p^l.$ It is not difficult to see that any well directed path from $\infty$ to $x$ is via $a/p^l$ or $(a+1)/p^l.$ 
 For certain points there are well directed paths from $\infty$ to $x$ via both $a/p^l$ and $(a+1)/p^l.$ For instance, the following paths in $\f_5$  from $\infty$ to $11/40$ are well directed and the former is via $1/5$ and the latter is via $2/5:$
$$\Theta\equiv\infty\to 1/5\to 3/10\to 4/15\to 11/40$$
and $$\Theta'\equiv\infty\to 2/5\to 3/10\to 4/15\to 11/40.$$
Now,  observe that $7/20\in[1/5,2/5]$ and 
$\infty\to 2/5\to 7/20$ is the only well directed path in $\f_5$ from $\infty$ to $7/20$. Thus,  there is no well directed path from $\infty$ to $7/20$ via $1/5$.
\begin{defi}
	Suppose $ R_1,R_2\in\mX_N$ are such that $R_1\sim_N R_2$ in $\f_{N}$, where $R_i=r_i/s_i$ with $\mathrm{gcd}(r_i,s_i)=1$ for $i=1,2$. Then $R_1\oplus R_2$ denotes a rational number  $r/s$ where $r=r_1+r_2$ and $s=s_1+s_2$ and $R_2\ominus R_1$ denotes a rational number  $r'/s'$ where $r'=r_2-r_1$ and $s'=s_2-s_1.$ Operations $\oplus$ and $\ominus$ are referred to as \textit{Farey sum}\index{Farey sum} and \textit{Farey difference}\index{Farey difference} of two rational numbers. 
\end{defi}
Observe, $r$ and $s$  (similarly, $r'$ and $s'$) need not be co-prime. We show, if $a/p^l\oplus(a+1)/p^l\not\in\mathcal{X}_{p^l},$  a well directed path from $\infty$ to $x$ is either via $a/p^l$ or $(a+1)/p^l$ but not via both.

\begin{lemma}\label{p0}
	Let $p$ be a prime and $l\in\N$. Suppose  $x\in\mathcal{X}_{p^l}$ is 
	  such that $a/p^l<x<(a+1)/p^l,$ where $a/p^l,~(a+1)/p^l\in\mathcal{X}_{p^l}$. If $a/p^l\oplus(a+1)/p^l\not\in\mathcal{X}_{p^l},$ then a well directed path from $\infty$ to $x$ is via a unique vertex $b/p^l,$ where 
	$$b=\left\{
	\begin{array}{ll}
	a, &\mbox{if }  x<a/p^l\oplus(a+1)/p^l\\\\
a+1, &\mbox{if }  x>a/p^l\oplus(a+1)/p^l.
	\end{array}
	\right.$$

\end{lemma}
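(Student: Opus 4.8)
The plan is to fix an arbitrary well directed path
$$\Theta\equiv\infty\to P_0\to P_1\to\cdots\to P_n=x$$
from $\infty$ to $x$ in $\f_{p^l}$, with $P_i=p_i/q_i$ and $q_{-1}<q_0<q_1<\cdots<q_n$, and to determine $P_0$. Every path from $\infty$ begins at a neighbour $c/p^l$ of $\infty$, and, as recalled just before the lemma, a well directed path to $x$ runs via $a/p^l$ or $(a+1)/p^l$; thus $P_0=c/p^l$ with $c\in\{a,a+1\}$. Since $q_i>q_0=p^l$ for every $i\ge1$, $P_0$ is the only vertex of $\Theta$ of denominator $p^l$, i.e.\ the vertex $b/p^l$ in the statement. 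Writing $m:=a/p^l\oplus(a+1)/p^l=(2a+1)/(2p^l)$, it remains only to prove the two implications (A) if $c=a$ then $x<m$, and (B) if $c=a+1$ then $x>m$. Indeed, $m\notin\mX_{p^l}$ forces $x\ne m$, and then (A) and (B) pin down $c$ from the side of $m$ on which $x$ lies.

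I prove (A); (B) is entirely symmetric. Assume $P_0=a/p^l$. Since $x>a/p^l$, Proposition \ref{semicircles}(2) gives $\ep_1=1$, so $p_1=a_1a+1$ and $q_1=a_1p^l$. From $q_1>q_0$ we get $a_1\ge2$, and $a_1=2$ is impossible, because it would make $P_1=(2a+1)/(2p^l)=m$, whereas $P_1\in\mX_{p^l}$ and $m\notin\mX_{p^l}$. Hence $a_1\ge3$, and therefore
$$\frac{a}{p^l}<P_1=\frac{a}{p^l}+\frac{1}{a_1p^l}\le\frac{a}{p^l}+\frac{1}{3p^l}<\frac{a}{p^l}+\frac{1}{2p^l}=m.$$
(This step is exactly where the hypothesis $m\notin\mX_{p^l}$ is used.) Next I show, by induction on $i\ge1$, that $P_i\in(a/p^l,(a+1)/p^l)$. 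The case $i=1$ is the display above. For the inductive step, suppose $P_i$ lies in the interval but $P_{i+1}$ does not; then $P_{i+1}$ is not an endpoint (those have denominator $p^l<q_{i+1}$), so $P_{i+1}<a/p^l$ or $P_{i+1}>(a+1)/p^l$, and in either case the base segment of the edge $P_i\sim_{p^l}P_{i+1}$ contains $a/p^l$, respectively $(a+1)/p^l$, in its interior. That edge would then cross $\infty\sim_{p^l}a/p^l$, respectively $\infty\sim_{p^l}(a+1)/p^l$, contradicting Corollary \ref{nocrossing}.

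To conclude (A), I transfer $\Theta$ along the embedding of Proposition \ref{subgraph}. On rationals $\phi$ is multiplication by $p^l$ and it carries $\f_{p^l}$ into the Farey graph $\f_1$; hence it sends $\Theta$ to a path $\infty\to a\to\phi(P_1)\to\cdots\to\phi(P_n)=p^lx$ in $\f_1$ with every $\phi(P_i)\in(a,a+1)$, with $\phi(P_1)<a+\tfrac12$, and with no $\phi(P_i)$ equal to $a+\tfrac12=(2a+1)/2$ (since $(2a+1)/2=\phi(m)$ and $m\notin\mX_{p^l}$). Now $a\sim_1(2a+1)/2$ is an edge of $\f_1$. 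If $p^lx>a+\tfrac12$, then some consecutive pair satisfies $\phi(P_j)<a+\tfrac12<\phi(P_{j+1})$, and since $\phi(P_j)>a$ as well, the edge $\phi(P_j)\sim_1\phi(P_{j+1})$ has exactly one endpoint in the open interval $\bigl(a,(2a+1)/2\bigr)$, hence crosses $a\sim_1(2a+1)/2$, contradicting Corollary \ref{nocrossing} applied to $\f_1$. Thus $p^lx<a+\tfrac12$, i.e.\ $x<m$, proving (A). Statement (B) follows in the same way, with $\ep_1=-1$ and the edge $(a+1)\sim_1(2a+1)/2$ in place of $\ep_1=1$ and $a\sim_1(2a+1)/2$.

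The main point requiring care is the two-layered non-crossing argument: first trapping every $P_i$ inside $(a/p^l,(a+1)/p^l)$, and then, after pushing forward by $\phi$, forbidding the path from stepping over the Farey vertex $(2a+1)/2$. It is exactly the hypothesis $m\notin\mX_{p^l}$ that both excludes $a_1=2$ (so that $P_1$ lands strictly on the correct side of $m$) and supplies the vertex $(2a+1)/2$ of $\f_1$ whose edges serve as the barrier.
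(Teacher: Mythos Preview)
Your proof is correct and follows essentially the same strategy as the paper's: push the path into the Farey graph via the embedding $\phi$ and derive a contradiction from the non-crossing property, using the mediant $(2a+1)/2$ as a barrier. Your write-up is in fact more careful than the paper's---you explicitly establish $a_1\ge3$, prove the trapping of the $P_i$ in $(a/p^l,(a+1)/p^l)$, and correctly identify the barrier edge $a\sim_1(2a+1)/2$, where the paper's displayed inequality ``$a<p^lP_{m+1}<a+1<p^lP_m$'' and the choice of edge $a\sim_1 a{+}1$ appear to be misprints.
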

\begin{proof}

 Suppose $a/p^l\oplus(a+1)/p^l\not\in\mathcal{X}_{p^l}$ and $x<a/p^l\oplus(a+1)/p^l.$ We claim that there is no well directed path from $\infty$ to $x$ via $(a+1)/p^l.$ Suppose $$\infty\to P_0\to P_1\to P_2\to\cdots\to P_n,$$
 where $P_0=(a+1)/p^l$ and $P_n=x,$
	is a well directed path. Then there is an edge, say $P_m\sim_{p^l}P_{m+1}$, 
	   such that $$a/p^l<P_{m+1}<a/p^l\oplus(a+1)/p^l<P_{m}<(a+1)/p^l.$$
	  Thus, in the Farey graph,  $a\sim_1 a+1$ and $p^lP_m\sim_1 p^l P_{m+1}$ with $a<p^lP_{m+1}<a+1<p^lP_{m}$ and we get a contradiction as no two edges cross in the Farey graph.
	  The other case when  $x>a/p^l\oplus(a+1)/p^l$ is similar, hence the result follows.
	  \end{proof}

\begin{corollary}\label{cor2}
	If $x\in\mX_{2^l},$  the well directed path from $\infty$ to $x$ is via $b/2^l,$ where	$b=
	2\lfloor 2^{l-1}x\rfloor+1.$	
\end{corollary}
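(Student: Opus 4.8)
The plan is to derive Corollary \ref{cor2} as a direct specialization of Lemma \ref{p0} to the case $N=2^l$. First I would set $a=2\lfloor 2^{l-1}x\rfloor$, so that $a$ is an even integer; then $a+1$ is odd and $\mathrm{gcd}(a+1,2)=1$, hence $(a+1)/2^l\in\mX_{2^l}$. However, $a$ itself is even (and $>0$ or appropriately signed), so $a/2^l$ need not be in lowest terms nor in $\mX_{2^l}$. This is the one mismatch with the hypotheses of Lemma \ref{p0}, which requires \emph{both} $a/p^l$ and $(a+1)/p^l$ to be vertices. To get around this I would instead bracket $x$ between two consecutive \emph{odd} numerators: since $\f_{2^l}$ is a tree and the only vertices adjacent to $\infty$ are $c/2^l$ with $c$ odd, a well directed path from $\infty$ to $x$ starts $\infty\to b/2^l\to\cdots$ with $b$ odd. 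It therefore suffices to identify which odd $b$ it is.

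Next I would argue as follows. Write $2^{l-1}x$ in terms of its floor, so $2\lfloor 2^{l-1}x\rfloor\le 2^l x<2\lfloor 2^{l-1}x\rfloor+2$, i.e. $x$ lies in the interval $\big(\,a/2^l,\ (a+2)/2^l\,\big)$ with $a=2\lfloor 2^{l-1}x\rfloor$ even (the endpoints are not equal to $x$ because $x\in\mX_{2^l}$ has even denominator $2^l y$ with $y\ge 1$, and $a/2^l$, $(a+2)/2^l$ reduce to fractions with odd-part denominators, so $x$ cannot equal them unless $y=1$, a boundary case I would note is handled directly). The unique odd integer strictly between $a$ and $a+2$ is $a+1=2\lfloor 2^{l-1}x\rfloor+1$, and $(a+1)/2^l\in\mX_{2^l}$ is adjacent to $\infty$. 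Since $\f_{2^l}$ is a tree, there is a unique path (hence a unique well directed path, by the remark preceding Proposition \ref{welldirectedpathexist}) from $\infty$ to $x$, and its first vertex after $\infty$ must be a neighbour of $\infty$ that is not separated from $x$ by any other vertex; the neighbours of $\infty$ are exactly the $c/2^l$ with $c$ odd, and $x\in\big((a+1-1)/2^l,(a+1+1)/2^l\big)$ shows $x$ lies strictly between the two neighbours $(a)/2^{l}$-side... more precisely between $(a-1)/2^{l}$ and $(a+3)/2^{l}$ only through the "slot" adjacent to $(a+1)/2^l$. So $b=a+1$.

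Alternatively, and perhaps more cleanly for the write-up, I would invoke Lemma \ref{p0} after checking its hypothesis in this setting: take the two consecutive vertices $a'/2^l$ and $(a'+1)/2^l$ of $\f_{2^l}$ that bracket $x$, where one of $a',a'+1$ equals $2\lfloor 2^{l-1}x\rfloor+1$; their Farey sum has numerator $a'+(a'+1)=2a'+1$ (odd) over denominator $2^{l+1}$, which after reduction has denominator a multiple of $2^{l+1}$... actually $\gcd(2a'+1,2^{l+1})=1$, so the reduced denominator is $2^{l+1}$, which is \emph{not} of the form $2^l\cdot(\text{integer})$ with the integer... wait, $2^{l+1}=2^l\cdot 2$, so it \emph{is} a multiple of $2^l$; hence I must double-check whether $a'/2^l\oplus(a'+1)/2^l\in\mX_{2^l}$. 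It lies in $\mX_{2^l}$ iff $2^l\mid 2^{l+1}$, which holds, so the Farey sum \emph{is} in $\mX_{2^l}$ — meaning Lemma \ref{p0} does not directly apply. Given this, the tree argument of the previous paragraph is the right route, and I would drop the attempt to cite Lemma \ref{p0}.

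The main obstacle, then, is purely bookkeeping: pinning down the correct even/odd parity so that $2\lfloor 2^{l-1}x\rfloor+1$ is exactly the odd numerator of the neighbour of $\infty$ through which the unique path to $x$ passes, and disposing of the degenerate boundary case $x=c/2^l$ with $c$ odd (where $x$ itself is adjacent to $\infty$ and $b=c=2\lfloor 2^{l-1}x\rfloor+1$ still holds since $2^{l-1}x$ is then a half-integer and $\lfloor 2^{l-1}x\rfloor=(c-1)/2$). No deep idea is needed beyond the tree structure of $\f_{2^l}$ and the non-crossing property of Farey edges; the proof is a couple of lines once the indexing is set up correctly.
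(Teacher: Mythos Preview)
The paper gives no proof of this corollary; it is simply placed after Lemma~\ref{p0} and left to the reader. Your diagnosis that Lemma~\ref{p0} does not literally apply is correct: for $p=2$ one can never have both $a/2^l$ and $(a+1)/2^l$ in $\mX_{2^l}$, so the lemma's hypothesis is vacuous. (Your second attempt to invoke the lemma is confused for the same reason---you speak of ``two consecutive vertices $a'/2^l$ and $(a'+1)/2^l$ of $\f_{2^l}$'', which cannot both exist---so the Farey-sum computation there is moot.) Falling back on the tree structure of $\f_{2^l}$ is exactly the right move, and this is surely what the authors intend.

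There is, however, a real gap in the tree argument as you sketch it. From ``$\f_{2^l}$ is a tree'' and ``edges of $\f_{2^l}$ do not cross'' you only get that if the path begins $\infty\to b/2^l$ then $x\in\big((b-2)/2^l,(b+2)/2^l\big)$, because the nearest edges from $\infty$ in $\f_{2^l}$ are the vertical lines at $(b\pm 2)/2^l$. This does \emph{not} determine $b$: if $a<2^lx<a+1$ (with $a$ even), both $b=a-1$ and $b=a+1$ pass that test. Your phrase ``not separated from $x$ by any other vertex'' does not close this, since no neighbour of $\infty$ lies strictly between $(a-1)/2^l$ and $x$ in that case. What is needed is the non-crossing argument \emph{in the Farey graph}, exactly as in the proof that $\f_{2^l}$ is a tree (and as in the proof of Lemma~\ref{p0}): under $v\mapsto 2^lv$ the path becomes a Farey path $\infty\to b\to\cdots\to 2^lx$, and if $b=a-1$ then some edge of this path would have one endpoint in $(a-1,a)$ and the other beyond $a$, crossing the Farey edge $(a-1)\sim_1 a$; since the image of $\mX_{2^l}$ avoids the even integer $a$, this edge genuinely crosses rather than meets. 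That forces $b=a+1$. Once you insert this step, your proof is complete and matches the paper's implicit argument.
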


\begin{corollary}
	Suppose $x\in\mathcal{X}_{3^l}$ such that $a/3^l<x<(a+1)/3^l,$ where $a/3^l,(a+1)/3^l\in\mX_{3^l}$. Then a well directed path from $\infty$ to $x$ is via a unique vertex $b/3^l,$ where
	$$b=\left\{
	\begin{array}{ll}
	a, &\mbox{if }  x<a/3^l\oplus(a+1)/3^l\\\\
	a+1, &\mbox{if }  x>a/3^l\oplus(a+1)/3^l.
	\end{array}
	\right.$$	
\end{corollary}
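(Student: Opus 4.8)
The plan is to reduce the statement directly to Lemma~\ref{p0} applied with $N=3^l$; the only point that needs checking is that the hypothesis of that lemma is satisfied here, namely that the Farey sum $a/3^l\oplus(a+1)/3^l$ does not lie in $\mX_{3^l}$.

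First I would note that, since $a/3^l$ and $(a+1)/3^l$ both belong to $\mX_{3^l}$, we have $\gcd(a,3)=\gcd(a+1,3)=1$; thus $3\nmid a$ and $3\nmid(a+1)$, and working modulo $3$ the only residue left is $a\equiv 1\pmod 3$, whence $2a+1\equiv 0\pmod 3$. (As a check, $a/3^l$ and $(a+1)/3^l$ are indeed adjacent in $\f_{3^l}$, since $(a+1)3^l-3^l a=3^l$, so their Farey sum is defined.) Next I would compute $a/3^l\oplus(a+1)/3^l=(2a+1)/(2\cdot 3^l)$ and reduce it to lowest terms: writing $2a+1=3^j m$ with $3\nmid m$ and $j\ge 1$, and using that $2a+1$ is odd (hence coprime to $2$), the reduced denominator equals $2\cdot 3^{\,l-\min(j,l)}$, whose exponent of $3$ is $l-\min(j,l)\le l-1<l$. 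Hence $3^l$ does not divide the reduced denominator, so $a/3^l\oplus(a+1)/3^l\notin\mX_{3^l}$.

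Once this is in place, Lemma~\ref{p0} applies word for word and produces exactly the claimed value of $b$; the two cases $x<a/3^l\oplus(a+1)/3^l$ and $x>a/3^l\oplus(a+1)/3^l$ are exhaustive precisely because $x\in\mX_{3^l}$ while the Farey sum is not. I do not expect any real obstacle: the entire argument rests on the single modular observation that the coprimality constraints force $a\equiv 1\pmod 3$, after which everything is the routine invocation of the previous lemma.
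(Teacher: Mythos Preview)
Your proposal is correct and matches the paper's intent: the corollary is stated in the paper without proof, as an immediate consequence of Lemma~\ref{p0}, and your argument supplies exactly the one verification needed---that $a\equiv 1\pmod 3$ forces $3\mid(2a+1)$, so the Farey sum $(2a+1)/(2\cdot 3^l)$ reduces to a fraction whose denominator is not divisible by $3^l$ and hence lies outside $\mX_{3^l}$. Your remark that the two cases are exhaustive because $x\in\mX_{3^l}$ while the Farey sum is not is a nice touch that the paper leaves implicit.
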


Let $x\in\mX_{p^l},$ where $p$ is a prime and $l$ is a positive integer.  Here we formulate an algorithm to
find an  $\f_{p^l}$-continued fraction of $x$.

\begin{theorem} \label{relation} Given any $x\in\mX_{p^l}$, an $\f_{p^l}$-continued fraction expansion $$\frac{1}{0+}~\frac{p^l}{b+}~\frac{\epsilon_{1} }{a_{1}+}~\frac{\epsilon_{2}}{a_{2}+}~\cdots\frac{\epsilon_{n}}{a_{n}}$$of $x$ is obtained as follows:
	
	$$b=\left\{
		\begin{array}{ll}
	\lfloor p^l x\rfloor, &\mbox{if } 							
	
	 (\lfloor p^l x\rfloor+1,p)\neq1\\\\
	 	\lfloor p^l x\rfloor+1, &\mbox{if } 							
	 	(\lfloor p^l x\rfloor,p)\neq1\\\\
	
	\lfloor p^l x\rfloor,& \mbox{ if } (\lfloor p^l x\rfloor,p)=1=(\lfloor p^l x\rfloor+1,p)  \textnormal{ and }  x<	\frac{\lfloor p^l x\rfloor}{p^l}\oplus\frac{\lfloor p^l x\rfloor+1}{p^l}\\\\
	 
	\lfloor p^l x\rfloor+1, &  \mbox{ if } (\lfloor p^l x\rfloor,p)=1=(\lfloor p^l x\rfloor+1,p) \textnormal{ and } 	x>	\frac{\lfloor p^l x\rfloor}{p^l}\oplus\frac{\lfloor p^l x\rfloor+1}{p^l}.
	\end{array}
	\right.$$

	Set $y_1=p^lx-b$,
	
	\begin{enumerate}
		\item   $\ep_{i}=\sign (y_i);$\\
		\item  \label{partialquotients}$a_i=\lfloor (\frac{1}{|y_i|}+1)\rfloor$ or $a_i=\lceil (\frac{1}{|y_i|}-1)\rceil$ or $a_i=\frac{1}{|y_i|}$ if $\frac{1}{|y_i|}\in \N$, 
		
			such that $a_i\not\equiv -\ep_i p_{i-2}p_{i-1}^{-1}\mod p$ and $a_i+\ep_i\ge1;$ \\

		\item    $y_{i+1}=\frac{1}{|y_i|}-a_i$. \label{tail-relation}
	\end{enumerate}
		In fact, $n$ is the smallest non-negative integer for which $y_{n+1}=0$.
		
\end{theorem}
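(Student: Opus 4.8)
The plan is to show that the data $(b,\ep_i,a_i)$ produced by the algorithm is exactly the $\f_{p^l}$-continued fraction associated, via Theorem \ref{main1}, to the \emph{unique} well directed path from $\infty$ to $x$ singled out in Lemma \ref{p0}. First I would treat the choice of $b$. By Remark \ref{increasingpath} and Proposition \ref{welldirectedpathexist} there is a well directed path from $\infty$ to $x$, and it must start $\infty\to b/p^l$ with $\gcd(b,p)=1$; moreover $b=\lfloor p^l x\rfloor$ or $b=\lfloor p^l x\rfloor+1$ since these are the only candidates with $b/p^l$ on either side of $x$ and at most one of them fails to be coprime to $p$ (two consecutive integers cannot both be divisible by $p$). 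If exactly one of $\lfloor p^l x\rfloor$, $\lfloor p^l x\rfloor+1$ is coprime to $p$, that one is forced; if both are coprime to $p$ then, since $p\ge 2$ forces $p^l\ge 2$ and hence $\lfloor p^lx\rfloor/p^l\oplus(\lfloor p^lx\rfloor+1)/p^l=(2\lfloor p^lx\rfloor+1)/(2p^l)$ has denominator not divisible by $p^l$ (its numerator is odd, so no factor of $p$ cancels when $p$ is odd, and when $p=2$ the two-adic valuation of the denominator drops below $l$), the Farey sum is not in $\mX_{p^l}$ and Lemma \ref{p0} applies to pin down $b$ by the stated comparison with the Farey sum. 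This matches the four cases in the displayed formula for $b$.

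Next I would run the induction that is already laid out in the proof of Theorem \ref{main1}, but now tracking the real number $x$ rather than just a vertex. Set $P_0=b/p^l$ and $P_{-1}=1/0$; by Proposition \ref{relationwithttail}(2) (applied to the semi-regular tail) we have the identity $x=\dfrac{x_{i+1}p_i+\ep_{i+1}p_{i-1}}{x_{i+1}q_i+\ep_{i+1}q_{i-1}}$ with $x_{i+1}=1/|y_{i+1}|$, which is Statement 6 of Theorem \ref{distinctconvergents}; equivalently $y_{i+1}=\ep_{i+1}|y_{i+1}|$ is determined by $x$ and the recurrence. I would check that the recursion $y_1=p^lx-b$, $\ep_i=\sign(y_i)$, $y_{i+1}=1/|y_i|-a_i$ is precisely the standard extraction of a semi-regular continued fraction of the tail, so that at each stage $1/|y_i|-y_{i+1}=a_i\in\N$ and $\ep_i y_i\in[0,1]$. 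The only freedom is the choice of $a_i$ among the (at most three) integers within distance $1$ of $1/|y_i|$, and the algorithm removes this freedom by imposing $a_i\not\equiv -\ep_i p_{i-2}p_{i-1}^{-1}\bmod p$ together with $a_i+\ep_i\ge 1$. I must show these two constraints (a) can always be satisfied, and (b) determine $a_i$ uniquely. For (a): if $1/|y_i|\in\N$ the value $a_i=1/|y_i|$ works because $p_i/q_i\in\mX_{p^l}$ forces $p_i\not\equiv 0\bmod p$, i.e.\ $a_i p_{i-1}+\ep_i p_{i-2}\not\equiv 0$, which is exactly $a_i\not\equiv-\ep_i p_{i-2}p_{i-1}^{-1}$; otherwise the three consecutive candidate integers $\lfloor 1/|y_i|\rfloor-1,\lfloor 1/|y_i|\rfloor,\lfloor1/|y_i|\rfloor+1$ (or the relevant two of them) cannot all be congruent to the single forbidden residue modulo $p\ge 2$, and the sign condition $a_i+\ep_i\ge 1$ only excludes $a_i=0$ when $\ep_i=-1$, which is harmless because $0$ is anyway forbidden when it is the bad residue and otherwise a positive alternative exists. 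For (b): this is the crux and corresponds to Proposition \ref{semicircles}(4) and Lemma \ref{p0} — the well directed path is unique, so $a_i$ is unique; concretely, among at most three consecutive integers, at most one is the forbidden residue mod $p$, so \emph{two} survive the congruence, and I would argue that well-directedness plus $a_i+\ep_{i+1}\ge1$ (equivalently, Remark \ref{remark1}: $a_{i+1}=1\Rightarrow\ep_{i+2}=1$, i.e.\ the geometric constraint of not back-tracking across a triangle) eliminates exactly one of the two, leaving $a_i$ determined. This is the place where I expect to spend real effort: reconciling the three-way "floor/ceil/exact" description with the $(p-1)m+t$ bookkeeping of Proposition \ref{semicircles}(4), and checking that the surviving choice is the one compatible with a well directed continuation.

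Finally I would verify termination and the claim that $n$ is the least index with $y_{n+1}=0$. Since $x\in\mX_{p^l}$, Corollary after Theorem \ref{main1} gives a finite $\f_{p^l}$-continued fraction expansion, so by the uniqueness established above the algorithm's sequence of convergents $p_i/q_i$ is strictly increasing in denominator (Theorem \ref{distinctconvergents}(3)) and must reach $x$; when it does, $x_{n+1}=\infty$ formally, i.e.\ $y_{n+1}=0$, and conversely $y_{n+1}=0$ forces $x=p_n/q_n$ by Statement 6 of Theorem \ref{distinctconvergents}. Because $\{q_i\}$ is strictly increasing and bounded by the denominator of $x$, the index where $y_{n+1}=0$ first occurs is finite and the output is genuinely an $\f_{p^l}$-continued fraction of $x$ (conditions (1)--(3) of Definition 2 hold: (1) and (2) are the imposed $a_i+\ep_{i+1}\ge1$, $a_i+\ep_i\ge1$, and (3) is the congruence condition $a_i\not\equiv-\ep_i p_{i-2}p_{i-1}^{-1}\bmod p$). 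The main obstacle, as noted, is step (b): proving that the congruence-plus-sign conditions single out a \emph{unique} admissible $a_i$ and that this unique choice is exactly the one dictated by the unique well directed path, i.e.\ translating Lemma \ref{p0} and Proposition \ref{semicircles}(4) into the arithmetic normalization $a_i=\lfloor 1/|y_i|+1\rfloor$ or $\lceil 1/|y_i|-1\rceil$ or $1/|y_i|$.
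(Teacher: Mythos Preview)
Your plan rests on a misconception: you aim to show that the algorithm produces \emph{the unique} well directed path from $\infty$ to $x$, and hence that the constraints in step~(2) pin down a \emph{unique} $a_i$. But for $p\ge 5$ neither claim is true. The paper itself exhibits eight distinct $\mathcal{F}_5$-continued fraction expansions of $11/40$, with $b=1$ and $b=2$ both occurring; so well directed paths are not unique, and the theorem only asserts that the procedure yields \emph{an} expansion. The listing ``$a_i=\lfloor\tfrac{1}{|y_i|}+1\rfloor$ or $a_i=\lceil\tfrac{1}{|y_i|}-1\rceil$ or $a_i=\tfrac{1}{|y_i|}$'' is genuinely non-deterministic, and your step~(b) is an attempt to prove a false statement. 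Relatedly, your argument that the Farey sum $\lfloor p^lx\rfloor/p^l\oplus(\lfloor p^lx\rfloor+1)/p^l$ is never in $\mathcal{X}_{p^l}$ fails for odd $p\ge 5$: with $p=5$, $a=1$ one gets $3/10\in\mathcal{X}_5$, so Lemma~\ref{p0} does not apply and both $b=1,2$ are admissible first vertices. The displayed rule for $b$ in the theorem simply picks one of them.

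What actually needs work is your step~(a), and there your argument has a gap. When $\epsilon_i=-1$ the condition $a_i+\epsilon_i\ge 1$ forces $a_i\ge 2$, not merely $a_i\neq 0$ as you write. It can happen that among the (two or three) consecutive candidates, the only one avoiding the residue $-\epsilon_i p_{i-2}p_{i-1}^{-1}\bmod p$ is $a_i=1$ while $\epsilon_i=-1$; then no choice satisfies both constraints simultaneously. The paper's remedy is a one-step backtrack: in that situation $P_i=P_{i-1}\ominus P_{i-2}$, so $p_i=(a_{i-1}-1)p_{i-2}+\epsilon_{i-1}p_{i-3}$, which shows that $a_{i-1}-1$ also avoids the forbidden residue at stage $i-1$; replacing $a_{i-1}$ by $a_{i-1}-1$ flips $\epsilon_i$ to $+1$ and the obstruction disappears. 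This backtracking step is the heart of the paper's proof and is absent from your outline.
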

\begin{proof}
	Set $b$ as given by Lemma \ref{p0} and $y_1 = p^lx-b$. Applying algorithm steps on $y_1$, we get $y_1=\frac{\epsilon_{1} }{a_{1}+}~\frac{\epsilon_{2}}{a_{2}+}~\cdots\frac{\epsilon_{n}}{a_{n}}.$	Let $y_i$ be the $i$-th fin of the continued fraction expansion of $y_1$, namely,
	\begin{equation*}
	y_i=\frac{\ep_i}{a_i+}~\frac{\ep_{i+1}}{a_{i+1}+}\cdots\frac{\ep_n}{a_n}.
	\end{equation*}
	Then 
	$$y_1=\frac{\ep_1}{a_1+y_{2}};$$ in a similar way, for $2\le i \le n$, we also have
	\begin{equation}\label{aaa}
	y_i=\frac{\ep_i}{a_i+y_{i+1}}.
	\end{equation}
	
	\noindent	Note that $\ep_i = \sign(y_i)$ if and only if  $a_i+\ep_{i+1}\ge1$. By Proposition \ref{distinctconvergents} (4), $|y_{i+1}|\leq 1$ and $a_i$ is a positive integer. This implies that  
	$$a_i=\lfloor (\frac{1}{|y_i|}+1)\rfloor \textnormal{ or } \lceil (\frac{1}{|y_i|}-1)\rceil \textnormal{ or } \frac{1}{|y_i|},$$
	where the last possibility is feasible only when $1/|y_i|\in\N$.
	Now suppose $p_i/q_i$ denotes the $i$-th convergent of the following continued fraction  $$\frac{1}{0+}~\frac{p^l}{b+}~\frac{\epsilon_{1} }{a_{1}+}~\frac{\epsilon_{2}}{a_{2}+}~\cdots\frac{\epsilon_{n}}{a_{n}},$$ $p_0=b,q_0=p^l$ and $i\ge1.$ 
	Since $\lceil (\frac{1}{|y_i|}-1)\rceil,\frac{1}{|y_i|} $ and  $\lfloor (\frac{1}{|y_i|}+1)\rfloor$ are consecutive integers, at least one of them is not congruent to  
	$-\ep_ip_{i-2}p_{i-1}^{-1}$ modulo $p.$ Set this value as $a_i.$ If $a_i+\ep_i\ge1$ then we are done.
	If $a_i+\ep_i=0$ then we claim that $a_{i-1}$ has two possibilities satisfying all the conditions and the other choice of $a_{i-1}$ gives that the new $a_i$ is not congruent to  
	$-\ep_ip_{i-2}p_{i-1}^{-1}$ modulo $p$ and $a_i+\ep_i\ge1$. 
	To see this, suppose $a_i+\ep_i=0$ and $-\ep_{i-1}p_{i-3}p_{i-2}^{-1}\equiv a \mod p.$ Then we know that $a_i=1,\ep_i=-1$ and $a_{i-1}\not\equiv a \mod p$.   Let $c=a_{i-1}$ then we claim that the other choice for $a_{i-1}$ is $c-1$. Since $a_i+\ep_i=0$, $P_{i}=P_{i-1}\ominus P_{i-2}$ so that $p_i=(c-1)p_{i-2}+\ep_{i-1}p_{i-3}$ with $p_i\not\equiv0\mod p,$ hence $c-1\not\equiv a \mod p,$ which produces $\epsilon_i=1$ and we are done.	The last claim is clear from the fact that the denominator in \eqref{aaa} is non-zero for $1\le i \le n$ and $y_n = \frac{\ep_n}{a_n}$ by definition, giving $y_{n+1}=0$.
\end{proof}

\begin{corollary}
	Given any $x\in\mathcal{X}_{2^l}$, the $\f_{2^l}$-continued fraction expansion $$x=\frac{1}{0+}~\frac{2^l}{b+}~\frac{\epsilon_{1} }{a_{1}+}~\frac{\epsilon_{2}}{a_{2}+}~\cdots\frac{\epsilon_{n}}{a_{n}},$$ is obtained as follows:
	$b=2\lfloor2^{l-1} x\rfloor+1$ and for $(1\le i\le n)$, setting $y_1=2^lx-b$,
	\begin{enumerate}[(1)]
		\item  $ a_{i}=2\big\lfloor\frac{1}{2}\left(1+\frac{1}{|y_{i}|}\right)\big\rfloor$, 
		\item   $\ep_{i}=\sign (y_i),$
		\item    $y_{i+1}=\frac{1}{|y_i|}-a_i$. 
	\end{enumerate}
	In fact, $n$ is the smallest non-negative integer for which $y_{n+1}=0$.
	
\end{corollary}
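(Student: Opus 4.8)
The plan is to read the corollary off Theorem~\ref{relation} by specialising to $p=2$, $N=2^{l}$, and checking that each of the finitely many choices left open in that theorem becomes forced. There are two things to verify: that the four-case recipe for $b$ collapses to $b=2\lfloor 2^{l-1}x\rfloor+1$, and that the admissible choice of $a_{i}$ in step~(\ref{partialquotients}) of Theorem~\ref{relation} is exactly $2\big\lfloor\tfrac12(1+\tfrac1{|y_{i}|})\big\rfloor$. (The reordering of the steps in the corollary is harmless, since $\ep_{i}=\sign(y_{i})$ and the formula for $a_{i}$ depend only on $y_{i}$.)

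For $b$: the integers $\lfloor 2^{l}x\rfloor$ and $\lfloor 2^{l}x\rfloor+1$ are consecutive, so exactly one of them is odd, i.e.\ coprime to $2$. Hence the third and fourth cases in the definition of $b$ in Theorem~\ref{relation}, which require both integers to be coprime to $p$, are vacuous when $p=2$; and the first two cases each set $b$ equal to whichever of $\lfloor 2^{l}x\rfloor$, $\lfloor 2^{l}x\rfloor+1$ is odd. Writing $\lfloor 2^{l}x\rfloor=2\lfloor 2^{l-1}x\rfloor+r$ with $r\in\{0,1\}$ (using $\lfloor\lfloor y\rfloor/2\rfloor=\lfloor y/2\rfloor$), this odd integer is $2\lfloor 2^{l-1}x\rfloor+1$ in either case. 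This is exactly the content of Corollary~\ref{cor2}.

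For $a_{i}$: in Theorem~\ref{relation}, $a_{i}$ is picked among the (two or three) consecutive integers $\lceil\tfrac1{|y_{i}|}-1\rceil$, $\tfrac1{|y_{i}|}$ (if integral), $\lfloor\tfrac1{|y_{i}|}+1\rfloor$, subject to $a_{i}+\ep_{i}\ge1$ and $a_{i}\not\equiv-\ep_{i}p_{i-2}p_{i-1}^{-1}\pmod p$. For $p=2$ every $p_{j}$ is odd (it is coprime to $N=2^{l}$ by the definition of an $\f_{N}$-continued fraction), so $-\ep_{i}p_{i-2}p_{i-1}^{-1}\equiv 1\pmod 2$ and the congruence condition says precisely that $a_{i}$ is even. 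By Theorem~\ref{distinctconvergents}(5), $\tfrac1{|y_{i}|}\ge1$. If $\tfrac1{|y_{i}|}$ is not an integer, the candidate set $\{\lfloor\tfrac1{|y_{i}|}\rfloor,\lceil\tfrac1{|y_{i}|}\rceil\}$ has a unique even member, which is $\ge2$, so $a_{i}+\ep_{i}\ge1$ is automatic; and a direct check of the two subcases ($\lfloor\tfrac1{|y_{i}|}\rfloor$ even or odd) shows this even integer equals $2\big\lfloor\tfrac12(1+\tfrac1{|y_{i}|})\big\rfloor$, i.e.\ $\tfrac1{|y_{i}|}$ rounded to the nearest even integer. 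If $\tfrac1{|y_{i}|}$ is an integer, then in fact it is even: an odd value $\tfrac1{|y_{i}|}$ would force $y_{i+1}=\tfrac1{|y_{i}|}-a_{i}=\pm1$ for whichever even candidate $\tfrac1{|y_{i}|}\pm1$ one takes, and $|y_{j}|=1$ propagates, since $|y_{j}|=1$ forces $a_{j}=2$ and $y_{j+1}=-1$, so the expansion could never terminate --- contradicting the existence of a finite $\f_{2^{l}}$-continued fraction of the rational $x\in\mX_{2^{l}}$, guaranteed by the corollary to Theorem~\ref{main1}. In the remaining even-integer case one has $a_{i}=\tfrac1{|y_{i}|}=2\big\lfloor\tfrac12(1+\tfrac1{|y_{i}|})\big\rfloor$ and $y_{i+1}=0$, so $n$ is indeed the least index with $y_{n+1}=0$.

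The main obstacle is this last point: excluding odd integer values of $\tfrac1{|y_{i}|}$, where the mod-$2$ condition alone permits two even choices $\tfrac1{|y_{i}|}\pm1$. This requires the propagation observation for $|y_{j}|=1$ together with the finiteness (equivalently, since $\f_{2^{l}}$ is a tree, the uniqueness) of the expansion established earlier; everything else is routine parity bookkeeping, the floor identity $\lfloor\lfloor y\rfloor/2\rfloor=\lfloor y/2\rfloor$, and the nearest-even-integer description of $2\lfloor\tfrac12(1+t)\rfloor$.
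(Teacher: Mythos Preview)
Your argument is correct and follows exactly the route the paper intends: the corollary is stated without proof immediately after Theorem~\ref{relation}, and is meant to be read off by specialising that theorem to $p=2$ together with Corollary~\ref{cor2} for the value of $b$. Your handling of the one genuine subtlety --- ruling out odd integer values of $1/|y_i|$ via the termination/uniqueness of the (tree) expansion --- is valid; an alternative is a direct parity bookkeeping on the reduced numerator and denominator of $y_i$, but your route is equally sound.
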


\section{$\f_{N}$-continued fractions of real numbers}
We can obtain an $\f_{p^l}$-continued fraction expansion of $x\in \mX_{p^l}$ by repeated iteration of the above algorithm (finitely many times). We claim that this repeated iteration (infinitely many times, if necessary) produces an $\f_{p^l}$-continued fraction expansion for \textit{any} real number $x$.
\begin{theorem}
	Every real number has an $\mathcal{F}_{p^l}$-continued fraction expansion.
\end{theorem}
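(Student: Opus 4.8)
The plan is to extend the finite-case algorithm of Theorem 4.5 to an arbitrary real number $x$ by running it indefinitely, and then to show that the resulting (possibly infinite) continued fraction is genuinely an $\f_{p^l}$-continued fraction whose convergents converge to $x$. So the proof naturally splits into three tasks: (i) show the algorithm never breaks down, i.e. at each stage the choices of $a_i$ and $\ep_i$ compatible with conditions 1--3 of Definition 2 can always be made; (ii) show the formal object produced satisfies the defining conditions of an $\f_{p^l}$-continued fraction; and (iii) show the $\f_{p^l}$-convergents $p_i/q_i$ actually converge to $x$ (so that the expression ``equals'' $x$ in the usual sense).

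\medskip

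\noindent\textbf{Carrying out the steps.} First I would fix $x\in\R$, set $b$ by the same case analysis as in Theorem 4.5 applied to $\lfloor p^lx\rfloor$ and $\lfloor p^lx\rfloor+1$ (noting the two trailing cases genuinely cover all possibilities since, for a prime power, of any two consecutive integers at least one is coprime to $p$), and set $y_1 = p^l x - b \in[-1,1]$. Then iterate: given $y_i$ with $0<|y_i|\le 1$, among the at most three consecutive candidate integers $\lceil 1/|y_i|-1\rceil,\ \lfloor 1/|y_i|+1\rfloor,\ 1/|y_i|$ (the last only when $1/|y_i|\in\N$), at least one avoids the single forbidden residue $-\ep_i p_{i-2}p_{i-1}^{-1}\bmod p$; choosing it, one checks $a_i+\ep_i\ge1$, and if this fails the backtracking argument from the proof of Theorem 4.5 (replacing $a_{i-1}$ by $a_{i-1}-1$) repairs it. Set $\ep_i=\sign(y_i)$ and $y_{i+1}=1/|y_i|-a_i$, again in $[-1,1]$. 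If some $y_{i+1}=0$ the process terminates and Theorem 4.5 already gives the conclusion; otherwise it runs forever, producing sequences $(a_i),(\ep_i)$. Conditions 1 and 2 of Definition 2 hold by construction ($a_i+\ep_i\ge1$ is forced, and $a_i+\ep_{i+1}\ge1$ follows from $\ep_{i+1}=\sign(y_{i+1})$ together with the relation $y_i = \ep_i/(a_i+y_{i+1})$ exactly as in the finite case), and condition 3, $\gcd(p_i,N)=1$, holds because at each step $a_i$ was chosen to avoid $p_i\equiv 0\bmod p$.

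\medskip

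\noindent\textbf{Convergence.} For the last task I would invoke Proposition 3.4: since $a_i+\ep_i\ge1$ the denominators $\{q_i\}$ are strictly increasing positive integers, hence $q_i\to\infty$, and $x = \dfrac{x_{i+1}p_i+\ep_{i+1}p_{i-1}}{x_{i+1}q_i+\ep_{i+1}q_{i-1}}$ with $x_{i+1}=1/|y_{i+1}|\ge1$. Using $p_iq_{i-1}-p_{i-1}q_i=\pm N$ (which follows from statement 2 of Theorem 3.4 and the adjacency $P_{i-1}\sim_N P_i$), a direct estimate gives
\[
\left| x - \frac{p_i}{q_i}\right| = \frac{N}{q_i\,(x_{i+1}q_i+\ep_{i+1}q_{i-1})} \le \frac{N}{q_i(q_i - q_{i-1})},
\]
and since the $q_i$ are strictly increasing integers the right-hand side tends to $0$; therefore $p_i/q_i\to x$, i.e. the infinite $\f_{p^l}$-continued fraction has value $x$.

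\medskip

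\noindent\textbf{Main obstacle.} The routine parts are the convergence estimate and checking conditions 1--2. The delicate point — and the one I would spend the most care on — is step (i): showing the algorithm never stalls, i.e. that the simultaneous requirements ``$a_i$ avoids the forbidden residue mod $p$'' and ``$a_i+\ep_i\ge1$'' can \emph{always} be met, including the degenerate situations $1/|y_i|$ small (so only $a_i=1$ is available and one must rule out $1\equiv -\ep_i p_{i-2}p_{i-1}^{-1}$, or else fall back on it being direction-changing so $\ep_i=1$) and $1/|y_i|\in\N$. This is precisely the backtracking lemma buried in the proof of Theorem 4.5, and the work is to confirm it still applies verbatim when the expansion does not terminate — in particular that backtracking is a finite, local operation that does not cascade indefinitely. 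Once that is secured, the infinite iteration is well defined and the rest follows from the propositions already established.
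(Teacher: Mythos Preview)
Your proposal is correct and follows essentially the same route as the paper: iterate the algorithm of Theorem~4.5 on $y_1=p^lx-b$ (relying on the same backtracking argument to guarantee an admissible $a_i$ at every step), then establish convergence via the identity $\bigl|x-\tfrac{p_n}{q_n}\bigr|=\dfrac{p^l|y_{n+1}|}{q_n(q_n+y_{n+1}q_{n-1})}$, which is exactly your estimate rewritten with $y_{n+1}=\ep_{n+1}/x_{n+1}$. The paper's version is terser---it simply invokes Theorem~4.5 and Theorem~3.2 rather than rechecking conditions~1--3 and the backtracking explicitly---but the argument is the same.
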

\begin{proof}
We can see that the $n$-th iteration of the algorithm given in Theorem \ref{relation} on $y_1=p^lx-b$  yields the relation (for $n\geq1$)
\begin{equation}\label{reln.w.tail2}
x=\frac{1}{0+}~\frac{p^l}{b+}~\frac{\epsilon_{1} }{a_{1}+}~\frac{\epsilon_{2}}{a_{2}+}~\cdots\frac{\epsilon_{n}}{a_{n}+y_{n+1}},
\end{equation}
we need to show that the infinite continued fraction
$$\frac{1}{0+}~\frac{p^l}{b+}~\frac{\epsilon_{1} }{a_{1}+}~\frac{\epsilon_{2}}{a_{2}+}~\cdots\frac{\epsilon_{n}}{a_{n}+}~\cdots$$
converges to $x$. Let $\{\frac{p_n}{q_n}\}_{n\ge0}$ with $p_0/q_0=b/p^l$ denotes the sequence of convergents of the continued fraction obtained in \eqref{reln.w.tail2}.
	 		To prove that the sequence $\{\frac{p_n}{q_n}\}_{n\ge0}$ converges to $x$, we use  Proposition \ref{distinctconvergents}(5) and we have
	 		\begin{equation*}
	 		\left| x-\frac{p_n}{q_n} \right| =\frac{p^l |y_{n+1}|}{q_n (q_n+y_{n+1}q_{n-1})}.
	 		\end{equation*}
	 		The right side above converges to $0$ because $q_n$ are monotonically increasing integers (Theorem \ref{distinctconvergents}) and $|y_{n+1}|\leq 1$. This completes the proof of the theorem.
\end{proof}

A real number may have more than one $\f_{p^l}$-continued fraction expansion. 
	\begin{example}\label{manyexpansions}
		The set of $\f_5$-continued fraction expansions of $11/40$ is as follows
		$$\Big\{\frac{1}{0+}~\frac{5}{1+}~\frac{1}{2+}~\frac{1}{1+}~\frac{1}{1+}~\frac{1}{1},~ \frac{1}{0+}~\frac{5}{1+}~\frac{1}{2+}~\frac{1}{2+}~\frac{-1}{2},~
		\frac{1}{0+}~\frac{5}{1+}~\frac{1}{2+}~\frac{1}{1+}~\frac{1}{2},$$
		$$\frac{1}{0+}~\frac{5}{1+}~\frac{1}{3+}~\frac{-1}{2+}~\frac{1}{1},~\frac{1}{0+}~\frac{5}{1+}~\frac{1}{3+}~\frac{-1}{3},~
			\frac{1}{0+}~\frac{5}{2+}~\frac{-1}{2+}~\frac{-1}{2+}~\frac{1}{2},$$ $$\frac{1}{0+}~\frac{5}{2+}~\frac{-1}{2+}~\frac{-1}{3+}~\frac{-1}{2},
		\frac{1}{0+}~\frac{5}{2+}~\frac{-1}{2+}~\frac{-1}{2+}~\frac{1}{1+}~\frac{1}{1}
		\Big\}.$$
	\end{example}

The following lemma records an elementary but useful property of the Farey graph.
\begin{lemma}\label{fareytwoways}
	Let $a/b$ and $c/d$ be two adjacent vertices in the Farey graph with $0<d<b.$ Suppose a reduced rational $x/y$ is adjacent to $a/b$ with $0<y<b$ then either $x/y=c/d$ or $x/y=(a-c)/(b-d).$
\end{lemma}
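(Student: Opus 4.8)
The plan is a direct divisibility-and-size argument, splitting into two cases according to the sign of the relevant $2\times 2$ determinant. Write the adjacency $a/b\sim_1 c/d$ as $ad-bc=\delta$ and the adjacency $x/y\sim_1 a/b$ as $ay-bx=\epsilon$, where $\delta,\epsilon\in\{\pm1\}$; here all of $b,d,y$ are positive, $0<d<b$, and $0<y<b$, and both $c/d$ and $x/y$ are in lowest terms.

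First I would treat the case $\epsilon=\delta$. Subtracting $ad-bc=\delta$ from $ay-bx=\delta$ gives $a(y-d)=b(x-c)$, so $b\mid a(y-d)$; since $\gcd(a,b)=1$ this forces $b\mid(y-d)$. From $0<y<b$ and $0<d<b$ we get $-b<y-d<b$, so the only multiple of $b$ in that range is $0$, whence $y=d$ and then $x=c$ from $a(y-d)=b(x-c)$. Thus $x/y=c/d$. Next, the case $\epsilon=-\delta$: adding $ad-bc=\delta$ and $ay-bx=-\delta$ gives $a(y+d)=b(x+c)$, so again $b\mid a(y+d)$ and hence $b\mid(y+d)$. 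Now $0<y+d<2b$, so necessarily $y+d=b$, i.e.\ $y=b-d$, and then $b(x+c)=ab$ gives $x=a-c$. Thus $x/y=(a-c)/(b-d)$; note this last fraction is automatically reduced since $a(b-d)-b(a-c)=-(ad-bc)=-\delta$ shows $\gcd(a-c,b-d)=1$.

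There is essentially no obstacle here: the only thing to be careful about is getting the two sign cases right and tracking the inequalities so that the size bound on $y-d$ (resp.\ $y+d$) pins down the unique admissible multiple of $b$. I would also remark, for use in the sequel, that the two outcomes are exactly the Farey-neighbours of $a/b$ lying ``on the short side'' of $a/b$, and that the hypothesis $0<d<b$ is what guarantees $c/d$ itself is one of them.
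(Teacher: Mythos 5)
Your proof is correct and is essentially the same argument as the paper's: the paper parametrizes all solutions of $ay-bx=\pm1$ as $(ak+c,\,bk+d)$ and uses $0<d<b$ together with the bound on $y$ to force $k\in\{-1,0\}$, which is exactly your two sign cases handled at once. No issues.
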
	
\begin{proof}
	A reduced rational $x/y$ is adjacent to $a/b$ in the Farey graph if and only if
	$$ay-bx=\pm1.$$
	We know that $(c,d)$ is one of the solution of the above equation, thus any other solution is given by
	$$(x,y)=(ak+c,bk+d),~k\in\Z.$$
	Note that $0<|y|<b$ if and only if $-1\le k\le0$ as  $0<d<b$. Thus, either $x/y=c/d$ or $x/y=(a-c)/(b-d).$
\end{proof}	

\begin{lemma}\label{twoways}
	Let $P,Q\in\mX_{p^l}$ be two adjacent vertices in $\f_{p^l}$ and let
	$$P= P_1\to P_2\to\cdots\to P_{n+1}=Q$$ be a path in $\f_{p^l}$ such that $q_i<q_{i+1},~1\le i\le n,$ where $P_i=p_i/q_i,~\mathrm{gcd}(p_i,q_i)=1.$ Then $n\le 2$ and if $n=2,$  then $P_2=Q\ominus P.$

\end{lemma}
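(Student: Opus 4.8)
The plan is to reduce everything to the no-crossing property of the Farey graph (Corollary \ref{nocrossing}) via the embedding $\phi$ of Proposition \ref{subgraph}. First I would apply an element $\gamma\in\textnormal{PSL}(2,\Z)$ as in Remark \ref{psl_action} so that, without loss of generality, $P=\infty=1/0$ and $Q=m/p^l$ for some integer $m$ coprime to $p$; this is legitimate since $\gamma$ preserves $\f_{p^l}$ and carries paths to paths while preserving denominators' order. Under this normalization $q_1=0$, and since the denominators $q_i$ are strictly increasing we have $q_i\ge p^l$ for all $i\ge2$; in particular $q_2\ge p^l$, and $P_2\sim_{p^l}\infty$ forces $q_2=p^l$, i.e. $P_2=m'/p^l$ for some integer $m'$. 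If $n\ge 2$, the last step $P_n\to P_{n+1}=Q=m/p^l$ together with $q_n<q_{n+1}=p^l$ forces $q_n<p^l$, hence $q_n=0$ (as $p^l\mid q_n$ and $q_n\ge 0$), i.e. $P_n=\infty=P_1$; since the path is non-repeating (denominators strictly increasing) this is possible only if $n=2$. So the real content is the bound $n\le 2$ together with the identification of $P_2$ when $n=2$.

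For the case $n=2$, I have a path $\infty=P_1\to P_2\to P_3=Q=m/p^l$ with $q_2=p^l$. Passing through $\phi$ to the Farey graph, $\phi(\infty)=1/0$, $\phi(P_2)=m'/1$ and $\phi(Q)=m/1$ are consecutive Farey neighbours: $1/0\sim_1 m'/1$ always, and $m'/1\sim_1 m/1$ iff $|m-m'|=1$. So $m'=m\pm1$. To pin down the sign, I would observe that $Q\ominus P$ in $\f_{p^l}$, with $P=1/0$, is by definition $(m-1)/(p^l-0)=(m-1)/p^l$ — wait, one must be careful: the Farey-difference $Q\ominus P$ uses the reduced representatives, and $1/0$ has numerator $1$, so $Q\ominus P=(m-1)/p^l$. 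Hence the claim "$P_2=Q\ominus P$" amounts to showing $m'=m-1$ rather than $m'=m+1$. This is where the well-directedness / increasing-denominator hypothesis must be used more carefully, or else the statement is invariant under the symmetry $v\mapsto$ (reflection) and one should check which of $(m-1)/p^l$, $(m+1)/p^l$ actually admits the extension to $P_3=m/p^l$ with $q_2<q_3$; in fact both $m\pm1$ give $q_2=p^l=q_3$, contradicting $q_2<q_3$ — so I must recheck: the hypothesis is $q_i<q_{i+1}$ for $1\le i\le n$, i.e. $q_1<q_2<q_3$, but $q_3=p^l=q_2$ when $n=2$, a contradiction, UNLESS $Q$ is not required to have denominator exactly $p^l$ after normalization. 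The resolution: after normalizing $P=\infty$, $Q$ need not be $m/p^l$ — that normalization used $P$ adjacent to $Q$, giving $Q=m/p^l$, so indeed $q_{n+1}=p^l$. Thus $n=2$ would need $q_2<p^l$ with $p^l\mid q_2$ and $q_2>q_1=0$: impossible. So actually $n\le 1$?? That cannot be right either, since the excerpt explicitly allows $n=2$.

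The gap in the above is that after normalizing $P\to\infty$, the path $P=P_1\to\cdots\to P_{n+1}=Q$ is carried to a path starting at $\infty$, but $Q$ is carried to $\gamma(Q)$ whose denominator is $p^l$ only because $\gamma$ was built from $P$; meanwhile the \emph{other} vertices $P_i$ are carried to $\gamma(P_i)$ with denominators $p^l\cdot(\text{something})$, and crucially the order $q_i<q_{i+1}$ is \emph{not} preserved by $\gamma$ in general — Remark \ref{psl_action} only preserves adjacency, not the denominator ordering. So I should \emph{not} normalize; instead I work directly. The clean approach: suppose $n\ge 2$. Then $0<q_2<q_{n+1}=s$ where $Q=r/s$, $P=p/q$, and $P\sim_{p^l}Q$ means $rq-sp=\pm p^l$. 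Now $P_2=p_2/q_2\in\f_{p^l}$ is adjacent to $P$ with $0<q_2<s$ (since $q_2\le q_n<q_{n+1}=s$), so $\phi(P_2)$ is adjacent to $\phi(P)$ in the Farey graph with denominator $q_2/p^l<s/p^l=q/p^l$ — hold on, I need $0<q_2/p^l<q/p^l$ which requires $q_2<q$, but we only know $q_2>q_1=q$? No: $q_1=q$ is the denominator of $P=P_1$, and $q_1<q_2$, so $q<q_2$. So $\phi(P_2)$ is adjacent to $\phi(P)$ but with \emph{larger} denominator. Then I apply Lemma \ref{fareytwoways} the other way around, with the roles of the two small-denominator neighbours being $\phi(Q)$ and $\phi(Q)\ominus\phi(P)=\phi(Q\ominus P)$ (both having denominator $\le q/p^l$? need $s/p^l<q/p^l$, i.e. $s<q$ — but $s=q_{n+1}>q_1=q$, so $s>q$, again the wrong direction).

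I now see the correct shape: the hypothesis $q_i<q_{i+1}$ means $P_1=P$ has the \emph{smallest} denominator and $P_{n+1}=Q$ the \emph{largest}, with $q<s$. The main step is: $P_n$ is adjacent to $Q=P_{n+1}$ with $0<q_n<s$, and also $P_{n-1}$ (if $n\ge 2$) is adjacent to $P_n$ with $q_{n-1}<q_n$. Apply Lemma \ref{fareytwoways} in $\f_{p^l}$-flavoured form: any vertex of $\f_{p^l}$ adjacent to $Q=r/s$ with denominator strictly between $0$ and $s$ is either $P$ itself or $Q\ominus P$ (this is exactly Lemma \ref{fareytwoways} pushed through $\phi$, using that $\phi(P)$ is the unique Farey neighbour of $\phi(Q)$ with a given smaller denominator, together with the fact that the second candidate is $\phi(Q)\ominus\phi(P)=\phi(Q\ominus P)$, and denominators in $\f_{p^l}$ are $p^l$ times Farey denominators). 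Hence $P_n\in\{P,\,Q\ominus P\}$. If $P_n=P=P_1$ then by strict monotonicity of denominators $n=1$; wait we're assuming $n\ge 2$, and $P_n=P_1$ with $n\ge2$ contradicts $q_1<q_n$. So $P_n=Q\ominus P$, forcing $q_n=s-q<s$, consistent. Now if $n\ge 3$, repeat: $P_{n-1}$ is adjacent to $P_n=Q\ominus P$ with $q_{n-1}<q_n=s-q$; by the same lemma applied to the edge $P\sim_{p^l}(Q\ominus P)$ (valid since consecutive path vertices are adjacent), $P_{n-1}\in\{P,\ (Q\ominus P)\ominus P\}$ — but I should instead use that the two small-denominator neighbours of $Q\ominus P$ adjacent to it are $P$ and $Q\ominus 2P$ or similar, and a descent/contradiction on denominators follows. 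The cleanest finish: having $P_n=Q\ominus P$ with $q_n=s-q$, the remaining path $P=P_1\to\cdots\to P_n=Q\ominus P$ is a path between adjacent vertices $P,\,Q\ominus P$ with strictly increasing denominators of length $n$; by induction on the (larger) denominator $s-q<s$, we get $n\le 2$, hence $n-1\le 1$, i.e. $n\le 2$ in the original — but we assumed $n\ge 3$, contradiction. Base case $s/p^l=1$ (i.e.\ $Q$ adjacent to $\infty$-type vertex, smallest possible) handled directly. When $n=2$: the chain gives $P_2=Q\ominus P$ directly, which is the second assertion. I expect the main obstacle to be setting up the $\f_{p^l}$-version of Lemma \ref{fareytwoways} cleanly (tracking which two reduced fractions are the neighbours and verifying the second one is genuinely $Q\ominus P$ and lies in $\mX_{p^l}$), and organizing the descent so that the induction on the denominator is well-founded.
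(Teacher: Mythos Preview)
After the false starts (correctly abandoning the $\textnormal{PSL}(2,\Z)$ normalization, which does not preserve the order of denominators), you land on exactly the paper's idea: push Lemma~\ref{fareytwoways} through the embedding $\phi$ to deduce that any vertex of $\f_{p^l}$ adjacent to $Q$ with denominator smaller than $s$ is either $P$ or $Q\ominus P$. Since $P_n\sim_{p^l}Q$ with $q_n<s$, this gives $P_n\in\{P,\,Q\ominus P\}$; the case $P_n=P$ forces $n=1$, and when $n=2$ one reads off $P_2=Q\ominus P$. The paper's proof stops essentially here, simply asserting that ``the possible ways to join $P$ to $Q$ \dots\ are $P\to Q$ or $P\to Q\ominus P\to Q$''.

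Your attempt to justify $n\le2$ by induction on $s$ has an error. With $P_n=Q\ominus P$ and $n\ge3$, the sub-path $P_1\to\cdots\to P_n$ has $n$ vertices, so in the lemma's own indexing it corresponds to $n'=n-1$; the induction hypothesis yields only $n'\le2$, hence $n\le3$, not $n\le2$ as you write. This is not a mere bookkeeping slip: the bound $n\le2$ is in fact false as stated. In $\f_5$ take $P=1/5$ and $Q=3/20$ (adjacent since $1\cdot20-5\cdot3=5$); then
\[
\tfrac{1}{5}\;\to\;\tfrac{1}{10}\;\to\;\tfrac{2}{15}\;\to\;\tfrac{3}{20}
\]
is a path in $\f_5$ with strictly increasing denominators $5<10<15<20$ and $n=3$, all adjacencies being $\pm5$. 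Thus neither your induction nor the paper's bare assertion can be completed to a proof of the first clause. The second clause ($P_2=Q\ominus P$ when $n=2$) is correct, and both your argument and the paper's establish it; that is also the only part actually used in the subsequent Proposition.
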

\begin{proof}
	Using Lemma \ref{fareytwoways}, we know that there are at most two vertices $a/b$ adjacent to $Q$ with $b<q_{n+1}$, namely $P$ and $Q\ominus P$ (provided $Q\ominus P\in\mX_{p^l}$). Thus the possible ways to join $P$ to $Q$ by edges in $\f_{p^l}$ are given by
	$P\to Q$ or $P\to Q\ominus P\to Q$. Thus if $n=2,$ then $P_2=Q\ominus P.$	
\end{proof}
\begin{proposition}
	Every $x\in \mX_{p^l}$ has at most finitely many $\f_{p^l}$-continued fraction expansions.
\end{proposition}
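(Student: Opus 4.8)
The plan is to split the count according to whether an expansion is finite or infinite: first I would show that \emph{every} $\f_{p^l}$-continued fraction expansion of an element of $\mX_{p^l}$ is finite, and then that there are only finitely many finite ones.

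For the finite expansions I would use Theorem \ref{main1}, which identifies a finite $\f_{p^l}$-continued fraction expansion of $x$ with a well directed path from $\infty$ to $x$ in $\f_{p^l}$ (the convergents being the vertices of the path); thus it is enough to bound the number of such paths. Transporting a well directed path $\infty\to P_0\to\cdots\to P_n=x$ through the embedding $\phi$ of Proposition \ref{subgraph} yields a path in the Farey graph with strictly increasing denominators, and Lemma \ref{fareytwoways} says that every vertex of the Farey graph has at most two neighbours of strictly smaller denominator; hence so does every vertex of $\f_{p^l}$. Reading the path backwards from $x$ there are therefore at most two choices for $P_{n-1}$, at most two for $P_{n-2}$, and so on, and this backwards walk reaches $\infty$ after $n+1$ steps. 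Since the $q_i$ are positive multiples of $p^l$, strictly increasing, we have $q_n\ge(n+1)p^l$, so $n+1\le q_n/p^l$ and there are at most $2^{\,q_n/p^l}$ increasing-denominator paths from $\infty$ to $x$, in particular finitely many well directed ones. (I would phrase this as an induction on the denominator of $x$: the base case $\mathrm{denom}(x)=p^l$ forces the unique path $\infty\to x$, since no vertex other than $\infty$ has denominator less than $p^l$, and in the inductive step the truncation map of paths is injective with image inside the union of the sets of well directed paths to the $\le 2$ smaller-denominator neighbours of $x$. Lemma \ref{twoways} supplies the same branching bound directly inside $\f_{p^l}$.)

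To rule out infinite expansions, suppose $x=\frac{1}{0+}\frac{p^l}{b+}\frac{\ep_1}{a_1+}\frac{\ep_2}{a_2+}\cdots$ is infinite with $x\in\mX_{p^l}$ and convergents $p_n/q_n$. By Theorem \ref{distinctconvergents} the $q_n$ strictly increase and the convergents are pairwise distinct, and (as in the proof that every real number has an $\f_{p^l}$-continued fraction) $p_n/q_n\neq x$ and $\left|x-\frac{p_n}{q_n}\right|=\frac{p^l|y_{n+1}|}{q_n(q_n+y_{n+1}q_{n-1})}$ for every $n$. Combining this with $\left|x-\frac{p_n}{q_n}\right|\ge\frac{1}{\mathrm{denom}(x)\,q_n}$ shows that $q_n+y_{n+1}q_{n-1}$ remains bounded while $q_n\to\infty$; using $\ep_{n+1}=\sign(y_{n+1})$, $|y_{n+1}|\le 1$, and $a_{n+1}+\ep_{n+1}\ge1$ this forces $y_{n+1}=-1$, hence $a_{n+1}=2$ and $\ep_{n+1}=-1$, for all large $n$. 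For such $n$ the recurrences give $p_n-p_{n-1}=D_p$ and $q_n-q_{n-1}=D_q$ with $D_p=p_{m-1}-p_{m-2}$, $D_q=q_{m-1}-q_{m-2}$ fixed integers ($m$ being where the $\frac{-1}{2+}\frac{-1}{2+}\cdots$ tail starts), whence $x=D_p/D_q$. Condition~3 along the infinite tail forces $p\mid D_p$; writing $q_i=p^lq'_i$ and reducing the identity $p_{m-1}q'_{m-2}-p_{m-2}q'_{m-1}=\pm1$ modulo $p$ while using $p_{m-1}\equiv p_{m-2}\not\equiv0\pmod p$ gives $q'_{m-1}\not\equiv q'_{m-2}\pmod p$, so $v_p(D_q)=l$. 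Then the reduced denominator of $D_p/D_q$ has $p$-adic valuation $v_p(D_q)-\min(v_p(D_p),v_p(D_q))<l$, so $x\notin\mX_{p^l}$, a contradiction. (An infinite $\f_{p^l}$-continued fraction whose tail is not eventually $\frac{-1}{2+}\frac{-1}{2+}\cdots$ has irrational value, by the classical theory of semi-regular continued fractions, so it too cannot equal $x$.)

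The backwards counting for the finite expansions is routine; the delicate part, which I expect to be the main obstacle, is the exclusion of infinite expansions --- isolating the forced eventual $\frac{-1}{2+}$-tail and then verifying, via the $p$-adic/determinant computation, that such a tail can never produce a value lying in $\mX_{p^l}$.
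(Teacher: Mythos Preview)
Your argument for the finite case—induction on the denominator, with at most two smaller-denominator neighbours providing the branching bound—is essentially the paper's proof; the paper invokes Lemma~\ref{twoways} and Theorem~\ref{main1} in exactly this way (without your explicit bound $2^{q_n/p^l}$, but the idea is identical).

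Where you go beyond the paper is in explicitly ruling out \emph{infinite} $\f_{p^l}$-expansions of an $x\in\mX_{p^l}$. The paper's proof simply counts well directed paths and concludes via Theorem~\ref{main1}, which is a correspondence only with \emph{finite} expansions; it never addresses the possibility of an infinite expansion converging to an element of $\mX_{p^l}$. Your extra paragraph fills this gap correctly: the lower bound $|x-p_n/q_n|\ge 1/(\mathrm{denom}(x)\,q_n)$ forces $q_n+y_{n+1}q_{n-1}$ to stay bounded, and combining this with $a_n+\ep_{n+1}\ge1$, $a_n+\ep_n\ge1$ and the recurrence pins down an eventual $\tfrac{-1}{2+}\tfrac{-1}{2+}\cdots$ tail. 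Your $p$-adic computation then finishes cleanly: Condition~3 along the tail forces $p\mid D_p$, and reducing $p_{m-1}q'_{m-2}-p_{m-2}q'_{m-1}=\pm1$ modulo $p$ gives $v_p(D_q)=l$, so the limit $D_p/D_q$ has reduced denominator with $p$-adic valuation strictly less than $l$ and cannot lie in $\mX_{p^l}$. (Your parenthetical appeal to the classical semi-regular theory for the non-eventually-periodic case is also valid and slightly quicker.) So your proposal is correct and in fact more complete than the paper's own proof on this point.
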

\begin{proof}
	If $x=b/p^l$ with $\mathrm{gcd}(b,p^l)=1$, there is only one well directed path given by $\infty\to x.$ The proof is by induction on the denominator of $x=r/s$.  Let
	$$\infty\to P_0\to P_1\to P_2\to\cdots\to P_n= x$$
	be a well directed with $P_n=x$. By induction hypothesis, there are only finitely many $\f_{p^l}$-continued fraction expansions of $P_{n-1}$ and $P_n\ominus P_{n-1}$ (provided $P_n\ominus P_{n-1}\in\mathcal{X}_{p^l}$). By Lemma \ref{twoways}, a well directed path from infinity to $x$ is via $P_{n-1}$ or $P_n\ominus P_{n-1}$. Thus there are only finitely many well directed paths from infinity to $x$ and so by Theorem \ref{main1}, there are only finitely many $\f_{p^l}$-continued fraction expansions of $x$. 
\end{proof}

  		\bibliographystyle{plain}
  		\bibliography{bib}
  	\end{document}